\theoremstyle{plain}
\newtheorem{theorem}{Theorem}[section]
\newtheorem{cor}{Corollary}[theorem]
\newtheorem{lemma}{Lemma}[section]
\theoremstyle{definition}
\numberwithin{equation}{section}
\begin{document}

 \title[Norm derivatives and geometry of bilinear operators]{Norm derivatives and geometry of bilinear operators}
\author[Divya Khurana and  Debmalya Sain]{Divya Khurana and Debmalya Sain}
                \newcommand{\acr}{\newline\indent}
\address[Khurana]{Department of Mathematics\\ Indian Institute of Science\\ Bengaluru 560012\\ Karnataka \\INDIA}
\email{divyakhurana11@gmail.com}

\address[Sain]{Department of Mathematics\\ Indian Institute of Science\\ Bengaluru 560012\\ Karnataka \\INDIA}
\email{saindebmalya@gmail.com}

\thanks{Dr. Sain feels elated to acknowledge his childhood friend Tamal Bandyopadhyay for his inspiring presence in every sphere of his life. The research of Dr. Divya Khurana and Dr. Debmalya Sain is sponsored by Dr. D. S. Kothari Postdoctoral Fellowship under the
mentorship of Professor Gadadhar Misra.}

\subjclass[2010]{Primary 46B20, Secondary 46G25}
\keywords{Birkhoff-James orthogonality; norm derivatives; bilinear
operators; smoothness; norm attainment set}

\begin{abstract}
We study the norm derivatives in the context of Birkhoff-James
orthogonality in real Banach spaces. As an application of this, we
obtain a complete characterization of the left-symmetric points and
the right-symmetric points in a real Banach space in terms of the
norm derivatives. We obtain a complete characterization of strong
Birkhoff-James orthogonality in $\ell_1^n$ and $\ell_\infty^n$
spaces. We also obtain a complete characterization of the
orthogonality relation defined by the norm derivatives in terms of
some newly introduced variation of Birkhoff-James orthogonality. We
further study Birkhoff-James orthogonality, approximate
Birkhoff-James orthogonality, smoothness and norm attainment of
bounded bilinear operators between Banach spaces.
\end{abstract}

\maketitle
\section{Introduction}

The purpose of the present article is to explore the connection
between the norm derivatives and Birkhoff-James orthogonality in
real Banach spaces. We also study smoothness and norm attainment of
bounded bilinear operators between Banach spaces. Let us first
establish the relevant notations and the terminologies to be used in
the present article.

Throughout the text, we use the symbols $X,Y,Z$ to denote real
normed linear spaces. Let $B_X=\{x\in X: \|x\|\leq1\}$ and let
$S_X=\{x\in X: \|x\|=1\}$ be the unit ball and the unit sphere of
$X$, respectively. Let $X^*$ denote the dual space of $X$. Given
$x\in S_X$, $f\in S_{X^*}$ is said to be a supporting functional at
$x$ if $f(x)=\|x\|$. Let $J(x)=\{f\in S_{X^*}: f(x)=\|x\|\}$ denote
the collection of all supporting functionals at $x$. Note that for
each $x\in S_X,$ the Hahn-Banach theorem ensures the existence of at
least one supporting functional at $x$.

If $x,y\in X,$ then we say that $x$ is Birkhoff-James orthogonal to
$y$, written as $x\perp_B y$, if $\|x+\lambda y\|\geq \|x\|$ for all
$\lambda\in \mathbb{R}$. We refer the readers to the pioneering
articles \cite{B, J1} for more information in this regard. In
\cite[Theorem 2.1]{J1}, James proved that $x\perp_B y$ if and only
if there exists $f\in J(x)$ such that $f(y)=0$. Given $0\not=x,y\in
X,$  $x$ is said to be strongly orthogonal to $y$ in the sense of
Birkhoff-James \cite{PSJ}, written as $x\perp_{SB}y,$ if
$\|x+\lambda y\|>\|x\|$ for all $\lambda\not=0$. For $x,y \in X$ and
$\epsilon\in [0,1)$, $x$ is said to be approximate
$\epsilon$-orthogonal to $y$ \cite{C}, written as $x\perp_B^\epsilon
y$, if $\|x+\lambda y\|^2\geq \|x\|^2-2\epsilon\|x\|\|\lambda y\|$
for all $\lambda \in \mathbb{R}$. Observe that Birkhoff-James
orthogonality is homogeneous.

Sain \cite{S} characterized Birkhoff-James orthogonality of linear
operators on finite-dimensional Banach spaces by introducing the
notions of the positive part of $x$, denoted by $x^+$, and the
negative part of $x$, denoted by $x^-$, for an element $x\in X$. For
any element $y\in X$, we say that $y\in x^+$ $(y\in x^-)$ if
$\|x+\lambda y\|\geq \|x\|$ for all $\lambda \geq 0$ $(\lambda \leq
0)$. It is easy to see that $x^\perp=\{y\in X: x\perp_B y\}=x^+\cap
x^-$. In general, Birkhoff-James orthogonality relation between two
elements need not be symmetric. In other words, for any two elements
$x,y\in X$, $x\perp_B y$ does not necessarily imply $y\perp_Bx$. An
element $x\in X$ is said to be left-symmetric (right-symmetric)
\cite{S} if $x\perp_B y$ ($y\perp_B x$) implies $y\perp_B x$
($x\perp_B y$). James \cite{J} proved that if dim $X\geq 3$ and
Birkhoff-James orthogonality is symmetric then the norm is induced
by an inner product.

An element $x\in S_X$ is said to be smooth point if $J(x)=\{ f\}$
for some $f\in S_{X^*}$. A Banach space $X$ is said to be smooth if
every $x\in S_X$ is a smooth point. The characterization of smooth
points obtained by James \cite{J1} has been used in our study, which
states that $0\not=x \in X$ is a smooth point in $X$ if and only if
for any $y, z \in X, $ $x\perp_B y$ and $x\perp_B z$ implies that
$x\perp_B (y+z)$. It is well known that smoothness of $x\in S_X$ is
equivalent to the G\^{a}teaux differentiability of norm at $x$,
$i.e.$, $x\in S_X$ is a smooth point in $X$ if and only if
$\rho(x,y)=\lim_{\lambda\rightarrow 0}\frac{\|x+\lambda
y\|-\|x\|}{\lambda}$ exists for all $y\in X$. We recall the
following standard notions of left-hand and right-hand G\^{a}teaux
derivative of norm at $x\in X$ in direction of $y\in X:$
\begin{align*}
\rho_{+}(x,y)=\lim_{\lambda\rightarrow 0^{+}}\frac{\|x+\lambda
y\|-\|x\|}{\lambda}, ~~\rho_{-}(x,y)=\lim_{\lambda\rightarrow
0^{-}}\frac{\|x+\lambda y\|-\|x\|}{\lambda}.
\end{align*}

We will use the following properties of norm derivatives in this
note (see \cite{AST} and \cite{M} for details):

\begin{itemize}
\item[(i)]For all $x$, $y\in X$ and all $\alpha\in\mathbb{R}$,
\[\rho_\pm(\alpha x, y)=\rho_\pm(x,\alpha y) = \left\{
    \begin{array}{ll}
        \alpha \rho_\pm(x,y)  & \mbox{if } \alpha\geq 0 \\
       \alpha \rho_\mp(x,y)  & \mbox{if } \alpha< 0.
    \end{array}
\right.\]

\item[(ii)] $\rho_{+}(x,y)=\rho_{-}(x,y)$ for all $y\in X$ if
and only if $0 \not=x$ is a smooth point in $X$,

\item[(iii)] $\rho_{\pm}$ are continuous with respect to the second
variable.
\end{itemize}

We note that there are several notions of orthogonality in a normed
linear space which are equivalent only if the norm is induced by an
inner product \cite{AMW}. Indeed, the above mentioned concepts of $
\rho_{\pm} $ yields the following notions of orthogonality in normed
linear spaces \cite{CW1}. Given $ x, y \in X, $

\[ x \perp_{\rho_{+}} y \Longleftrightarrow \rho_{+} (x, y) = 0; \]
 \[ x \perp_{\rho_{-}} y \Longleftrightarrow \rho_{-} (x, y) = 0; \]
 \[x \perp_{\rho} y \Longleftrightarrow \rho_{+} (x, y) + \rho_{-} (x, y) = 0.\]

As mentioned in \cite{CW1}, the relations $ \perp_{\rho_{+}},
\perp_{\rho_{-}} $ and $ \perp_{\rho} $ are equivalent in an inner
product space but are generally incomparable in a normed space which
is not smooth. Nevertheless, we illustrate that it is possible to
establish a connection between $ \perp_{\rho_{+}}, \perp_{\rho_{-}}
$ and Birkhoff-James orthogonality. To serve our purpose, we
introduce the following variation of Birkhoff-James orthogonality,
denoted by $ \perp_{B^*}.$ If $x,y\in S_X$ then we write
$x\perp_{B^*}y$ if $x\perp_B y$ and $x {\not\perp}_B u$ for any $u$
of the form $u=ty+(1-t)x,$ where $t\in (0,1)$.

If $X$ and $Y$ are normed linear (Banach/reflexive Banach) spaces
then it is easy to see that $X\times Y$ is a normed linear
(Banach/reflexive Banach) space equipped with the norm
$\|(x,y)\|_\infty=\max \{\|x\|,\|y\|\}$ for all $(x,y)\in X\times
Y$.  Throughout this article we will consider $X\times Y$ with the
aforesaid norm. Let $\mathcal{T}:X\times Y\rightarrow Z$ be a
bilinear operator, $i.e.$, $\mathcal{T}$ is linear in each argument.
The norm of $\mathcal{T}$ is defined as $\|\mathcal{T}\|=\sup
\{\|\mathcal{T}(x,y)\|:(x,y)\in S_{X} \times S_{Y}\}$. Let
$M_{\mathcal{T}}$ denote the norm attainment set of $\mathcal{T}$,
$i.e.$, $M_{\mathcal{T}}=\{(x,y)\in S_{X} \times
S_{Y}:\|{\mathcal{T}}(x,y)\|=\|{\mathcal{T}}\|\}$.
$\{(x_n,y_n)\}\subset S_{X} \times S_{Y}$ is said to be a norming
sequence for $\mathcal{T}$ if
$\|\mathcal{T}(x_n,y_n)\|\rightarrow\|\mathcal{T}\|$. We say that
$\mathcal{T}$ is bounded if $\|\mathcal{T}\|<\infty$ and
$\mathcal{T}$ is compact if for all bounded sequences
$\{(x_n,y_n)\}\subset X\times Y$, the sequence
$\{\mathcal{T}(x_n,y_n)\}$ has a convergent subsequence. Let
$\mathscr{B}(X\times Y,Z)$ $(\mathscr{K}(X\times Y,Z))$ denote the
space of all bounded (compact) bilinear operators from $X\times
Y$ to $Z$.\\
The concept semi-inner-product (s.i.p.) in normed linear spaces
\cite{L} plays an important role in our discussion of the geometry
of bilinear operators. Giles \cite{G} proved that for every normed
linear space $X$, there exists a s.i.p. on $ X $ which is compatible
with the norm, $i.e.$, $[x,x]=\|x\|^2$ for all $x\in X$. Also, for a
normed linear space $X$, s.i.p. on $ X $ which is compatible with
the norm is unique if and only if $X$ is smooth. For $X=\ell_p^n$,
$1<p<\infty$, the following is the unique s.i.p. on $X$ compatible
with its norm:
\begin{align*}
[x,y]=\frac{1}{\|y\|^{p-2}}\sum_{i=1}^nx_iy_i|y_i|^{p-2}
\end{align*}
where $x=(x_1,\ldots,x_n),~y=(y_1,\ldots, y_n)\in
\ell_p^n\setminus\{0\}$.

Whenever we work with a s.i.p. in a normed linear space, we assume
that the s.i.p. is compatible with the norm. We will use the
following result related to s.i.p. and Birkhoff-James orthogonality
in normed linear spaces.

\begin{theorem}\cite{G}\label{sip and BJ}
Let $X$ be a normed linear space and let $x$, $y\in X$. If $x\perp_B
y$ then there exists a s.i.p. $[~,~]$ on $X$ such that $[y, x]=0$.
\end{theorem}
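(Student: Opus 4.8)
The plan is to reconstruct Giles's construction of a compatible semi-inner product, but exploiting the freedom in the choice of supporting functional at $x$ so as to force $[y,x]=0$. Recall the Lumer--Giles recipe: if for each $z\in X$ one can select a functional $W_z\in X^*$ with $\|W_z\|=\|z\|$, $W_z(z)=\|z\|^2$, and the homogeneity condition $W_{\lambda z}=\lambda W_z$ for every $\lambda\in\mathbb{R}$, then setting $[u,z]:=W_z(u)$ defines a semi-inner product on $X$ which is compatible with the norm. First I would verify this: linearity in the first slot is just linearity of $W_z$; positive definiteness is $[z,z]=W_z(z)=\|z\|^2$; the Cauchy--Schwarz inequality follows from $|[u,z]|=|W_z(u)|\le\|W_z\|\,\|u\|=\|z\|\,\|u\|$, which gives $|[u,z]|^2\le[z,z]\,[u,u]$; and homogeneity in the second variable is precisely the condition $W_{\lambda z}=\lambda W_z$. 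Thus the whole problem reduces to engineering the selection $z\mapsto W_z$.

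Next I would dispose of the trivial case $x=0$: then $[y,x]=[y,0]=0$ for any semi-inner product, by homogeneity in the second variable, so assume $x\neq 0$. Since $x\perp_B y$, James's theorem \cite[Theorem 2.1]{J1} provides $f\in J(x)$, that is, $\|f\|=1$ and $f(x)=\|x\|$, with $f(y)=0$. This $f$ is the functional I want to attach to $x$: put $W_x:=\|x\|f$, so that $\|W_x\|=\|x\|$, $W_x(x)=\|x\|^2$, and, crucially, $W_x(y)=\|x\|f(y)=0$.

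It then remains to extend this single prescription to a globally defined homogeneous selection. Using the axiom of choice, pick one representative on each line $\mathbb{R}z\setminus\{0\}$ through the origin, taking $x$ itself as the representative of $\mathbb{R}x$; on that representative use the functional just constructed, and on every other representative $z$ use $\|z\|g$ for some $g\in J(z)$, which is nonempty by Hahn--Banach. Then propagate along each line by declaring $W_{\lambda z}:=\lambda W_z$. A short check shows $W_{\lambda z}\in X^*$ with $\|W_{\lambda z}\|=|\lambda|\,\|z\|=\|\lambda z\|$ and $W_{\lambda z}(\lambda z)=\lambda^2\|z\|^2=\|\lambda z\|^2$, so the selection has all the required properties; consequently $[u,z]:=W_z(u)$ is a semi-inner product compatible with the norm, and $[y,x]=W_x(y)=0$.

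The only point requiring care is the globalization step: one must check that forcing the value $f(y)=0$ at the single pair $(x,y)$ does not clash with building a homogeneous selection on all of $X$. Since homogeneity relates only points lying on a common line, and the imposed constraint touches only the line $\mathbb{R}x$, there is no conflict, so this step is a bookkeeping argument with the axiom of choice rather than a genuine obstacle; the substantive input is simply James's characterization of Birkhoff--James orthogonality.
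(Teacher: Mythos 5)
The paper gives no proof of this statement---it is quoted directly from Giles \cite{G}---and your argument is a correct reconstruction of the standard one: build a compatible semi-inner product from a homogeneous selection of support functionals, choosing at $x$ the James functional $f\in J(x)$ with $f(y)=0$. The verification of the s.i.p.\ axioms and the line-by-line globalization are both sound, so nothing further is needed.
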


%\begin{definition}
%Let $X$ be a normed linear space. A function $[~,~]:X\times
%X\rightarrow\mathbb{R}$ is said to be s.i.p. on $X$ if for any
%$\alpha,\beta\in \mathbb{R}$ and for any $x,y,z\in X$, it satisfies
%the following:
%\begin{itemize}
%\item[(i)] $[\alpha x+\beta y, z]=\alpha[x,z]+\beta [y,z]$,
%\item[(ii)] $[x,x]>0$, whenever $x\not=0$,
%\item[(iii)] $[x,y]^2\leq [x,x][y,y]$,
%\item[(iv)] $[x,\alpha y]=\alpha[x,y]$.
%\end{itemize}
%\end{definition}

After this introductory part, this article is demarcated into two
sections. The first section deals with the norm derivatives and its
connections with geometry of the space and Birkhoff-James
orthogonality. In the next section, we explore Birthkhoff-James
orthogonality, approximate Birthkhoff-James orthogonality,
smoothness and the norm attainment of bounded bilinear operators
between Banach spaces. These results extend some of the recent
observations regarding the analogous properties of bounded bilinear
operators between Banach spaces \cite{DS}.

%We
%obtain a complete characterization of $x^+$, $(x^-)$ in terms of the
%norm derivative $\rho_+(x,.)$ $(\rho_-(x,.))$. As a corollary to
%this characterization we obtain complete characterizations of the
%left-symmetric and the right-symmetric points in a Banach space in
%terms of norm derivatives $\rho_\pm$. We obtain a complete
%characterization of strong Birkhoff-James orthogonality in
%$\ell_1^n$ and $\ell_\infty^n$ spaces in terms of the norm
%derivatives $\rho_\pm$. We also obtain a complete characterization
%of positive part, negative part of elements of $\ell_1^n$ and
%$\ell_\infty^n$.

\section{norm derivatives}
We start with the following complete characterizations of  the
positive part and  the negative part of an element in a normed
linear space, in terms of the norm derivatives $\rho_{\pm}$ and
s.i.p. on the given normed linear space.
\begin{theorem}\label{positive}
Let $X$ be a normed linear space and let $x, y\in X$. Then the
following are equivalent:
\begin{itemize}
\item[(a)] $\rho_+(x,y) \geq 0$.
\item[(b)] $y\in x^+$.
\item[(c)] There exists a s.i.p. $[~,~]$ on $X$ such that $[y,x]\geq
0$.
\end{itemize}
\end{theorem}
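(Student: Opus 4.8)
The plan is to prove (a) $\Leftrightarrow$ (b) as a soft consequence of convexity, and then close the loop with (c) $\Rightarrow$ (b) and (b) $\Rightarrow$ (c). Throughout I would assume $x\neq 0$, since for $x=0$ all three conditions hold trivially. For (a) $\Leftrightarrow$ (b): the function $\lambda\mapsto\|x+\lambda y\|$ is convex on $\mathbb{R}$, so the slope $\lambda\mapsto(\|x+\lambda y\|-\|x\|)/\lambda$ is nondecreasing on $(0,\infty)$, whence $\rho_+(x,y)=\inf_{\lambda>0}(\|x+\lambda y\|-\|x\|)/\lambda$. Therefore $\rho_+(x,y)\ge 0$ if and only if $\|x+\lambda y\|\ge\|x\|$ for every $\lambda>0$, which is exactly the statement $y\in x^+$.

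For (c) $\Rightarrow$ (b): let $[\cdot,\cdot]$ be a s.i.p.\ with $[y,x]\ge 0$. For each $\lambda\ge 0$, the Cauchy--Schwarz inequality for a s.i.p.\ gives $\|x\|\,\|x+\lambda y\|\ge[x+\lambda y,x]=\|x\|^2+\lambda[y,x]\ge\|x\|^2$, and dividing by $\|x\|>0$ yields $\|x+\lambda y\|\ge\|x\|$, i.e.\ $y\in x^+$.

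The substantial part, which I expect to be the main obstacle, is (b) $\Rightarrow$ (c). The strategy is first to produce a supporting functional $f_0\in J(x)$ with $f_0(y)\ge 0$, and then to build a compatible s.i.p.\ that uses $f_0$ at $x$. For the first step one may invoke the standard identity $\rho_+(x,y)=\|x\|\max\{f(y):f\in J(x)\}$ (the maximum being attained because $J(x)$ is weak-$*$ compact), which together with the already-proved equivalence $\rho_+(x,y)\ge 0$ delivers $f_0$; alternatively, if every $f\in J(x)$ had $f(y)<0$, a weak-$*$ compactness and separation argument on $B_{X^*}$ would force $\|x+\lambda y\|<\|x\|$ for all sufficiently small $\lambda>0$, contradicting (b). For the second step, using Hahn--Banach (and a choice function on the set of lines through the origin), select for each $v\in S_X$ a functional $g_v\in J(v)$ subject to $g_{x/\|x\|}=f_0$ and $g_{-v}=-g_v$, and set $[u,v]:=\|v\|\,g_{v/\|v\|}(u)$ for $v\neq 0$ and $[u,0]:=0$. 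A routine verification shows $[\cdot,\cdot]$ is a s.i.p.\ compatible with the norm: additivity and homogeneity in the first slot are immediate since each $g_v$ is linear; homogeneity in the second slot uses the sign convention $g_{-v}=-g_v$; and $[v,v]=\|v\|^2$ together with $|[u,v]|\le\|u\|\,\|v\|$ hold because each $g_v$ is a norming functional of norm one. By construction $[y,x]=\|x\|f_0(y)\ge 0$, giving (c). The two genuinely nontrivial points are the extraction of $f_0$ (the subdifferential/attainment argument) and the check that the prescribed selection really yields homogeneity of $[\cdot,\cdot]$ in the second variable.
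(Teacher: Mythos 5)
Your proposal is correct, and it is worth comparing the two routes because you end up proving more than the paper does. For the equivalence of (a) and (b) you and the paper use the same underlying fact, convexity of $\lambda\mapsto\|x+\lambda y\|$; your version, via the monotonicity of the difference quotients and the identity $\rho_+(x,y)=\inf_{\lambda>0}\frac{\|x+\lambda y\|-\|x\|}{\lambda}$, is a single unified argument, whereas the paper splits into the cases $\rho_+(x,y)>0$ (handled by convexity) and $\rho_+(x,y)=0$ (handled by citing a result of Chmieli\'{n}ski and W\'{o}jcik that $\rho_+(x,y)=0$ implies $x\perp_B y$). The real divergence is in (b) $\Leftrightarrow$ (c): the paper disposes of this in one line by citing an earlier theorem of Sain, Paul and Mal, while you reprove it from scratch. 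Your (c) $\Rightarrow$ (b) via the s.i.p.\ Cauchy--Schwarz inequality applied to $[x+\lambda y,x]$ is clean and correct. Your (b) $\Rightarrow$ (c) correctly identifies the two genuinely delicate points: extracting $f_0\in J(x)$ with $f_0(y)\geq 0$ (which follows from the max formula $\rho_+(x,y)=\|x\|\max\{f(y):f\in J(x)\}$, the attainment coming from weak-$*$ compactness of $J(x)$ --- this is the content of the Megginson lemma the paper quotes later as Lemma~\ref{normderivative}), and then running the Giles construction of a compatible s.i.p.\ with the selection pinned to $f_0$ on the line through $x$ and antisymmetrized by $g_{-v}=-g_v$ to secure homogeneity in the second slot. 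What your approach buys is a self-contained proof independent of the cited literature; what the paper's approach buys is brevity, at the cost of pushing all of the (b) $\Leftrightarrow$ (c) content into an external reference.
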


\begin{proof}
We first prove the equivalence of $(a)$ and $(b)$. To prove that
$(a)$ implies $(b)$ we consider the following two
cases.\\

Case I: Let $\rho_+(x,y)>0$. Then there exists sufficiently small
$\lambda_0>0$ such that $\|x+\lambda y\|-\|x\|>0$ for all
$\lambda\in(0,\lambda_0]$. Now, by the convexity of the norm it
follows that $\|x+\lambda y\|\geq\|x\|$ for all $\lambda\geq 0$ and
hence $u\in x^+$.\\

Case II: Let $\rho_+(x,y)=0$. Then it follows from \cite[Theorem
3.2]{CW} that $x\perp_B y$ and thus $y\in x^+$.

Now, we show that $(b)$ implies $(a)$. Observe that $(b)$ implies,
$\|x+\lambda y\|\geq \|x\|$ for all $\lambda\geq 0$. Thus
$\rho_{+}(x,y)=\lim_{\lambda\rightarrow 0^{+}}\frac{\|x+\lambda
y\|-\|x\|}{\lambda}\geq 0$ and $(a)$ follows.

Equivalence of $(b)$ and $(c)$ follows from \cite[Theorem 2.4]{SPM}.
\end{proof}

The following result follows from similar arguments as used in the
above theorem.

\begin{theorem}\label{negative}
Let $X$ be a normed linear space and let $x, y\in X$. Then the
following are equivalent:
\begin{itemize}
\item[(a)] $\rho_-(x,y) \leq 0$.
\item[(b)] $y\in x^-$.
\item[(c)] There exists a s.i.p. $[~,~]$ on $X$ such that $[y,x]\leq
0$.
\end{itemize}
\end{theorem}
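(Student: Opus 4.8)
The plan is to mirror the proof of Theorem \ref{positive} exactly, swapping the roles of $\rho_+$ and $\rho_-$, of $x^+$ and $x^-$, and of the inequalities $\geq 0$ and $\leq 0$, while keeping track of the sign changes that arise because the one-sided derivatives behave oppositely under negative scaling. The cleanest route is probably to reduce everything to Theorem \ref{positive} by replacing $y$ with $-y$.

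\medskip

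First I would observe that by property (i) of the norm derivatives, $\rho_-(x,y) = -\rho_+(x,-y)$: indeed $\rho_-(x,y) = \rho_+(x, -(-y)) $, and since $-1 < 0$ we get $\rho_-(x,y) = (-1)\rho_+(x,-y)$. Hence condition (a), $\rho_-(x,y) \leq 0$, is equivalent to $\rho_+(x,-y) \geq 0$. Next, directly from the definition, $y \in x^-$ means $\|x + \lambda y\| \geq \|x\|$ for all $\lambda \leq 0$, which by the substitution $\lambda \mapsto -\lambda$ is the same as $\|x + \mu(-y)\| \geq \|x\|$ for all $\mu \geq 0$, i.e.\ $-y \in x^+$. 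Finally, for the s.i.p.\ condition: a s.i.p.\ is homogeneous in its first slot, so $[{-y}, x] = -[y,x]$, whence the existence of a s.i.p.\ with $[y,x] \leq 0$ is equivalent to the existence of a s.i.p.\ with $[-y, x] \geq 0$. Applying Theorem \ref{positive} to the pair $(x, -y)$ then gives the equivalence of all three reformulated conditions, and unwinding the three equivalences above yields the theorem.

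\medskip

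Alternatively, if one prefers a self-contained argument paralleling the original proof: for (a) $\Rightarrow$ (b), split into the case $\rho_-(x,y) < 0$, where smallness of the difference quotient near $0^-$ together with convexity of $\lambda \mapsto \|x + \lambda y\|$ forces $\|x + \lambda y\| \geq \|x\|$ for all $\lambda \leq 0$; and the case $\rho_-(x,y) = 0$, where the cited result \cite[Theorem 3.2]{CW} (or the left-hand analogue) gives $x \perp_B y$ and hence $y \in x^- = x^+ \cap x^-$. For (b) $\Rightarrow$ (a), note $y \in x^-$ gives $\|x + \lambda y\| \geq \|x\|$ for $\lambda \leq 0$, so each difference quotient $\frac{\|x+\lambda y\| - \|x\|}{\lambda}$ with $\lambda < 0$ is $\leq 0$, and passing to the limit gives $\rho_-(x,y) \leq 0$. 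The equivalence of (b) and (c) is again \cite[Theorem 2.4]{SPM}, applied to the negative part.

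\medskip

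I do not expect any genuine obstacle here; the only point requiring care is the bookkeeping of signs, since $\rho_-$ and $x^-$ each introduce one sign flip and one must confirm these are consistent (they are: the substitution $y \rightsquigarrow -y$ simultaneously turns $\rho_-$ into $\rho_+$, $x^-$ into $x^+$, and "$\leq 0$" into "$\geq 0$" in all three conditions). The reduction to Theorem \ref{positive} makes this transparent, so I would present that version.
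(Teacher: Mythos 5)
Your proposal is correct and takes essentially the same approach as the paper, which offers no written proof here beyond the remark that the result ``follows from similar arguments as used in the above theorem.'' Your reduction via $y\mapsto -y$, using $\rho_-(x,y)=-\rho_+(x,-y)$, $y\in x^-\iff -y\in x^+$, and $[-y,x]=-[y,x]$, is a clean and accurate formalization of exactly that symmetry, and the sign bookkeeping checks out.
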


In \cite{S2}, Sain et al. obtained a complete characterization of
the left-symmetric points and the right-symmetric points in a normed
linear space in terms of the positive part and the negative part of
the elements of the space. As a consequence of
Theorem~\ref{positive}, Theorem~\ref{negative} and results obtained
in \cite{S2}, we obtain the following complete characterizations of
the left-symmetric points and the right-symmetric points of a normed
linear space in terms of the norm derivatives $\rho_{\pm}$.

\begin{cor}
Let $X$ be a normed linear space and let $x\in X$. Then the
following are equivalent:
\begin{itemize}
\item[(a)] $x$ is a left-symmetric point in $X$.
\item[(b)] Given $y\in X$ with $\rho_+(x,y) \geq 0$ implies that $\rho_+(y,x) \geq 0$.
\item[(c)] Given $y\in X$ with $\rho_-(x,y) \leq 0$ implies that $\rho_-(y,x) \leq 0$.
\end{itemize}
\end{cor}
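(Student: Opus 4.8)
The plan is to deduce the corollary directly from the characterization of left-symmetric points obtained in \cite{S2}, using Theorems \ref{positive} and \ref{negative} as a dictionary between the one-sided norm derivatives and the positive and negative parts. Recall from \cite{S2} that $x$ is a left-symmetric point in $X$ precisely when, for every $y \in X$, the membership $y \in x^+$ forces $x \in y^+$; equivalently, replacing $x$ by $-x$ and using the identity $(-x)^+ = x^-$ (together with the obvious fact that $x$ is left-symmetric if and only if $-x$ is), $x$ is left-symmetric precisely when, for every $y \in X$, $y \in x^-$ forces $x \in y^-$. These are the two forms of the \cite{S2} criterion that feed into $(b)$ and $(c)$ respectively.

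First I would record the two translations. Applying the equivalence $(a)\Leftrightarrow(b)$ of Theorem \ref{positive} once to the pair $(x,y)$ and once to the pair $(y,x)$ shows that the statement ``$\rho_+(x,y)\geq 0 \Rightarrow \rho_+(y,x)\geq 0$ for all $y\in X$'' is verbatim the statement ``$y\in x^+ \Rightarrow x\in y^+$ for all $y\in X$''; combining this with the first form of the \cite{S2} criterion gives $(a)\Leftrightarrow(b)$. In the same way, the equivalence $(a)\Leftrightarrow(b)$ of Theorem \ref{negative} rewrites ``$\rho_-(x,y)\leq 0 \Rightarrow \rho_-(y,x)\leq 0$ for all $y\in X$'' as ``$y\in x^- \Rightarrow x\in y^-$ for all $y\in X$'', and the second form of the \cite{S2} criterion gives $(a)\Leftrightarrow(c)$.

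I expect no genuine obstacle here: once Theorems \ref{positive} and \ref{negative} are available, the corollary is a term-by-term reformulation of a known result, and there is no interaction between $\rho_+$ and $\rho_-$ to disentangle. The only point deserving a moment's care is bookkeeping with the two dual forms of the left-symmetry criterion --- the one through $x^+$ and the one through $x^-$ --- so that each of $(b)$ and $(c)$ is matched against the correct one; the passage between them is the trivial $x\mapsto -x$ observation noted above.
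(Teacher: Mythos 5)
Your proposal is correct and follows essentially the same route as the paper: both translate the statements via Theorems~\ref{positive} and \ref{negative} into the language of $x^{+}$ and $x^{-}$ and then invoke the characterization of left-symmetric points from \cite[Theorem 2.1]{S2}. The only (immaterial) difference is organizational: the paper argues in a cycle $(a)\Rightarrow(b)\Rightarrow(c)\Rightarrow(a)$ and handles the $\pm$ duality through the homogeneity properties of $\rho_{\pm}$, whereas you prove $(a)\Leftrightarrow(b)$ and $(a)\Leftrightarrow(c)$ directly and handle the duality via $(-x)^{+}=x^{-}$.
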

\begin{proof}
Let $(a)$ holds true and let $y\in X$ be such that $\rho_+(x,y) \geq
0$. Then it follows from Theorem~\ref{positive} that $y\in x^+$.
Now, it follows from \cite[Theorem 2.1]{S2} that $x\in y^+$. Again,
using Theorem~\ref{positive} it follows that $\rho_+(y,x) \geq 0$
and thus $(b)$ follows.

Let $(b)$ holds true and let $y\in X$ with $\rho_-(x,y) \leq 0$.
Then by using the properties of the norm derivatives we get
$\rho_+(x,-y)\geq 0$. Now, $(b)$ implies $\rho_+(-y, x)\geq 0$.
Again, using the properties of the norm derivatives we get
$\rho_-(y,x) \leq 0$. Thus $(c)$ follows.

Let $(c)$  holds true. To prove that $(a)$ holds true \cite[Theorem
2.1]{S2} implies, it is sufficient to show that given  $y\in X$ such
that $y\in x^-$ implies $x\in y^-$. Let $y\in X$ be such that $y\in
x^-$, then it follows from Theorem~\ref{negative} that $\rho_-(x,y)
\leq 0$. Now, $(c)$ implies that $\rho_-(y,x) \leq 0$ and again
using Theorem~\ref{negative} it follows that $x\in y^-$. Thus $(a)$
follows from \cite[Theorem 2.1]{S2}.
\end{proof}

Using similar arguments as in the above corollary and \cite[Theorem
2.2]{S2} the following result follows.

\begin{cor}
Let $X$ be a normed linear space let $x\in X$. Then the following
are equivalent:
\begin{itemize}
\item[(a)] $x$ is a right-symmetric point in $X$.
\item[(b)] Given $y\in X$ with $\rho_+(y,x) \geq 0$ implies that $\rho_+(x,y) \geq 0$.
\item[(c)] Given $y\in X$ with $\rho_-(y,x) \leq 0$ implies that $\rho_-(x,y) \leq 0$.
\end{itemize}
\end{cor}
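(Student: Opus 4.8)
The plan is to repeat the proof of the previous corollary almost word for word, interchanging the roles of the two vectors throughout and using the right-symmetric analogue \cite[Theorem 2.2]{S2} in place of \cite[Theorem 2.1]{S2}. Recall that \cite[Theorem 2.2]{S2} asserts that $x$ is right-symmetric in $X$ if and only if $x\in y^+$ implies $y\in x^+$ for every $y\in X$, and equivalently (replace $y$ by $-y$, noting $x\in(-y)^-\Leftrightarrow x\in y^+$ and $(-y)^-=-(y^+)$) if and only if $x\in y^-$ implies $y\in x^-$ for every $y\in X$. The passage to the norm derivatives is furnished by Theorem~\ref{positive} and Theorem~\ref{negative}, which, applied with the arguments in the relevant order, give $\rho_+(y,x)\geq 0\Leftrightarrow x\in y^+$ and $\rho_-(y,x)\leq 0\Leftrightarrow x\in y^-$. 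Finally, property~(i) of the norm derivatives supplies the sign identities $\rho_+(u,-v)=-\rho_-(u,v)$ and $\rho_+(-u,v)=-\rho_-(u,v)$, which are what let one convert a ``$\rho_-\leq 0$'' hypothesis into a ``$\rho_+\geq 0$'' hypothesis and back.

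With this dictionary the three implications are routine. For $(a)\Rightarrow(b)$: if $x$ is right-symmetric and $\rho_+(y,x)\geq 0$, then $x\in y^+$ by Theorem~\ref{positive}, hence $y\in x^+$ by \cite[Theorem 2.2]{S2}, hence $\rho_+(x,y)\geq 0$ again by Theorem~\ref{positive}. For $(b)\Rightarrow(c)$: if $\rho_-(y,x)\leq 0$ then $\rho_+(-y,x)\geq 0$ by property~(i), so $(b)$ applied to the vector $-y$ gives $\rho_+(x,-y)\geq 0$, and one more application of property~(i) turns this into $\rho_-(x,y)\leq 0$. For $(c)\Rightarrow(a)$: by \cite[Theorem 2.2]{S2} it suffices to check that $x\in y^-$ implies $y\in x^-$ for every $y\in X$; but $x\in y^-$ gives $\rho_-(y,x)\leq 0$ by Theorem~\ref{negative}, then $(c)$ gives $\rho_-(x,y)\leq 0$, and Theorem~\ref{negative} once more gives $y\in x^-$.

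Since every step is a direct translation of the argument already given, there is no genuine obstacle here; the only points that require care are invoking \cite[Theorem 2.2]{S2} in both of its equivalent forms --- the ``$x\in y^+\Rightarrow y\in x^+$'' form in $(a)\Rightarrow(b)$ and the ``$x\in y^-\Rightarrow y\in x^-$'' form in $(c)\Rightarrow(a)$ --- and keeping the sign bookkeeping of property~(i) straight when passing between $\rho_+$ and $\rho_-$, and between $y$ and $-y$, in the step $(b)\Rightarrow(c)$.
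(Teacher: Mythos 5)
Your proof is correct and is exactly the argument the paper intends: the paper omits the proof entirely, stating only that it follows by the same reasoning as the preceding corollary with \cite[Theorem 2.2]{S2} in place of \cite[Theorem 2.1]{S2}, which is precisely what you carry out (Theorem~\ref{positive}/\ref{negative} to translate between $\rho_\pm$ and the positive/negative parts, property (i) for the sign flips in $(b)\Rightarrow(c)$). One cosmetic slip that does not affect the argument: in your parenthetical justifying the two forms of \cite[Theorem 2.2]{S2}, the identity $(-y)^-=-(y^+)$ should read $(-y)^-=y^+$ (equivalently $-(y^+)=y^-$); the correct equivalences are $x\in(-y)^+\Leftrightarrow x\in y^-$ and $-y\in x^+\Leftrightarrow y\in x^-$.
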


We would like to give an explicit description of the sign of the
norm derivatives in $\ell_p^n$ spaces, $1\leq p\leq \infty$. Note
that when $1<p<\infty$, the corresponding space is smooth and
therefore, the desired description follows by using the unique
s.i.p. expression in the said space.

%Indeed, we have the following theorem, the proof of which is omitted
% it is trivial.

\begin{theorem}
Let $x=(x_1,\ldots,x_n)$, $y=(y_1,\ldots,y_n)\in
{\ell_p^n}\setminus\{0\}$, $1<p<\infty$. Then
\begin{itemize}
\item[(a)] $\rho_+(x,y)\geq 0$ if and only if $\sum\limits_{i=1}^ny_ix_i|x_i|^{p-2}\geq 0$,
\item[(b)] $\rho_-(x,y)\leq 0$ if and only if $\sum\limits_{i=1}^ny_ix_i|x_i|^{p-2}\leq
0$.
\end{itemize}
\end{theorem}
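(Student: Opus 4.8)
The plan is to reduce both statements to the unique semi-inner-product (s.i.p.) on $\ell_p^n$ by invoking Theorem~\ref{positive} and Theorem~\ref{negative}. Since $1<p<\infty$, the space $\ell_p^n$ is smooth, so by Giles' theorem there is \emph{exactly one} s.i.p. on $\ell_p^n$ compatible with the norm, namely the one displayed in the introduction; applied to the pair $(y,x)$ with $x\neq 0$ it gives $[y,x]=\|x\|^{2-p}\sum_{i=1}^n y_i x_i |x_i|^{p-2}$. The scalar $\|x\|^{2-p}$ is a well-defined, strictly positive real number (here one only needs $x\neq 0$; note $p-2$ may be negative when $1<p<2$), so the sign of $[y,x]$ coincides with the sign of $\sum_{i=1}^n y_i x_i |x_i|^{p-2}$.

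For part $(a)$: by the equivalence $(a)\Leftrightarrow(c)$ in Theorem~\ref{positive}, $\rho_+(x,y)\geq 0$ holds if and only if there exists a compatible s.i.p. on $\ell_p^n$ with $[y,x]\geq 0$. Because the compatible s.i.p. is unique, the existential quantifier collapses: the condition becomes $\|x\|^{2-p}\sum_{i=1}^n y_i x_i |x_i|^{p-2}\geq 0$, which by the first paragraph is equivalent to $\sum_{i=1}^n y_i x_i |x_i|^{p-2}\geq 0$. Part $(b)$ follows in exactly the same manner, using the equivalence $(a)\Leftrightarrow(c)$ of Theorem~\ref{negative} in place of Theorem~\ref{positive}, and replacing $\geq 0$ by $\leq 0$ throughout.

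There is no genuine obstacle here; the only points requiring care are (i) recording that $x\neq 0$ makes $\|x\|^{2-p}$ a strictly positive scalar, so it can be divided out without affecting signs, and (ii) using smoothness of $\ell_p^n$ to guarantee uniqueness of the compatible s.i.p., which is what lets the ``there exists a s.i.p.'' clause in Theorems~\ref{positive} and~\ref{negative} be read as a statement about the one explicit s.i.p. formula. For completeness one could instead differentiate $\lambda\mapsto\|x+\lambda y\|_p=\big(\sum_{i=1}^n|x_i+\lambda y_i|^p\big)^{1/p}$ directly at $\lambda=0$, obtaining $\rho_+(x,y)=\rho_-(x,y)=\|x\|^{1-p}\sum_{i=1}^n y_i x_i|x_i|^{p-2}$ (the two one-sided derivatives agreeing by smoothness, and coordinates with $x_i=0$ contributing nothing since $p>1$); the sign assertions then read off immediately, but the s.i.p. route keeps the argument in line with Theorems~\ref{positive} and~\ref{negative}.
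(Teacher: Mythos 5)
Your proposal is correct and follows essentially the same route as the paper: both invoke the equivalence with the s.i.p.\ condition from Theorem~\ref{positive} and Theorem~\ref{negative}, use smoothness of $\ell_p^n$ for $1<p<\infty$ to identify the unique compatible s.i.p.\ with the explicit formula, and read off the sign of $[y,x]$ from the sign of $\sum_{i=1}^n y_i x_i |x_i|^{p-2}$. You merely make explicit two points the paper leaves tacit (the positivity of the scalar $\|x\|^{2-p}$ and the collapse of the existential quantifier), which is fine.
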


\begin{proof}
If $x=(x_1,\ldots,x_n)$, $y=(y_1,\ldots,y_n)\in
{\ell_p^n}\setminus\{0\}$, $1<p<\infty$, then the unique s.i.p. on
${\ell_p^n}$, $1<p<\infty$, is given by
\begin{align*}
[y,x]=\frac{1}{\|x\|^{p-2}}\sum_{i=1}^ny_ix_i|x_i|^{p-2}.
\end{align*}
Now, the proof of $(a)$, $(b)$ follows from Theorem~\ref{positive}
and Theorem~\ref{negative}, respectively.
\end{proof}

When $p=1,\infty$, the corresponding $\ell_p^n$ space is not smooth
and there exist infinitely many s.i.p. on the space. Our next result
addresses this issue by means of the direct computation. If $t\in
\mathbb{R}$ then $sgn~t$ denotes the sign function, $i.e.$,
$sgn~t=\frac{t}{|t|}$ for $t\not=0$ and $sgn~0=0$. We first write a
simple observation: if $x, y\in \mathbb{R}$, where $x\not=0$, then
there exists sufficiently small $\lambda_0>0$ such that $|x+\lambda
y|=|x|+\lambda (sgn~x)y$ for all $\lambda\in
[-\lambda_0,\lambda_0]$.

\begin{theorem}
Let $x=(x_1,\ldots,x_n),y=(y_1,\ldots,y_n)\in \ell_\infty^n$. Then
\begin{itemize}
\item[(a)] $\rho_+(x,y)\geq 0$ if and only if there exists $1\leq i_0\leq n$ such that $\|x\|=|x_{i_0}|$ and $(sgn~ x_{i_0}) y_{i_0} \geq
0$.
\item[(b)] $\rho_-(x,y)\leq 0$ if and only if there exists $1\leq i_0\leq n$ such that $\|x\|=|x_{i_0}|$ and $(sgn~ x_{i_0}) y_{i_0} \leq
0$.
\end{itemize}
\end{theorem}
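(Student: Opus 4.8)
The plan is to bypass the theorems on the positive and negative parts and instead compute $\rho_{\pm}(x,y)$ directly, by localizing the computation to the coordinates at which $\|x\|_\infty$ is attained. First I would dispose of the case $x=0$: here $\rho_+(0,y)=\|y\|\geq 0$ and $\rho_-(0,y)=-\|y\|\leq 0$ for every $y$, while the right-hand conditions hold vacuously because $\|x\|=0=|x_{i_0}|$ and $(sgn~0)y_{i_0}=0$ for every $i_0$; thus both equivalences are automatic. So assume $x\neq 0$ and set $I_x=\{i:|x_i|=\|x\|\}$, which is nonempty, with $x_i\neq 0$ for each $i\in I_x$.

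Next I would invoke the elementary observation recorded just before the statement, applied coordinatewise. For each $i\in I_x$ there is $\lambda_i>0$ with $|x_i+\lambda y_i|=|x_i|+\lambda(sgn~x_i)y_i$ for all $|\lambda|\leq\lambda_i$, and for each $j\notin I_x$, by continuity of $|x_j+\lambda y_j|$ in $\lambda$ there is $\mu_j>0$ with $|x_j+\lambda y_j|<\|x\|$ for all $|\lambda|\leq\mu_j$. Setting $\lambda_0=\min\bigl(\min_{i\in I_x}\lambda_i,\ \min_{j\notin I_x}\mu_j\bigr)>0$, for every $|\lambda|\leq\lambda_0$ the coordinates outside $I_x$ are strictly subdominant, so
\[
\|x+\lambda y\|_\infty=\max_{i\in I_x}\bigl(|x_i|+\lambda(sgn~x_i)y_i\bigr)=\|x\|+\max_{i\in I_x}\lambda(sgn~x_i)y_i .
\]
For $0<\lambda\leq\lambda_0$ the right side is $\|x\|+\lambda\max_{i\in I_x}(sgn~x_i)y_i$, so $\rho_+(x,y)=\max_{i\in I_x}(sgn~x_i)y_i$; for $-\lambda_0\leq\lambda<0$ the negative factor $\lambda$ turns the maximum into a minimum, giving $\|x\|+\lambda\min_{i\in I_x}(sgn~x_i)y_i$, so $\rho_-(x,y)=\min_{i\in I_x}(sgn~x_i)y_i$.

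With these explicit formulas, (a) follows since $\rho_+(x,y)\geq 0$ iff $\max_{i\in I_x}(sgn~x_i)y_i\geq 0$ iff there exists $i_0\in I_x$ (equivalently $\|x\|=|x_{i_0}|$) with $(sgn~x_{i_0})y_{i_0}\geq 0$; likewise (b) follows since $\rho_-(x,y)\leq 0$ iff $\min_{i\in I_x}(sgn~x_i)y_i\leq 0$ iff there exists $i_0\in I_x$ with $(sgn~x_{i_0})y_{i_0}\leq 0$. The only steps needing care — and hence the main obstacle — are the uniform choice of $\lambda_0$ so that all coordinates simultaneously behave linearly or stay strictly below $\|x\|$, and the bookkeeping of the sign reversal $\max\leftrightarrow\min$ in passing from $\lambda>0$ to $\lambda<0$; everything else is routine. (One could alternatively prove each implication separately via Theorem~\ref{positive} and Theorem~\ref{negative} using an explicitly exhibited s.i.p.\ on $\ell_\infty^n$, but the direct computation above is more transparent and yields the exact value of $\rho_\pm$ as a by-product.)
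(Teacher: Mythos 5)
Your proof is correct in substance but follows a genuinely different route from the paper's. The paper never computes $\rho_\pm(x,y)$ explicitly: for sufficiency it verifies the inequality $\|x+\lambda y\|\geq\|x\|$ for all $\lambda\geq 0$ at the single coordinate $i_0$ and then invokes Theorem~\ref{positive} (resp.\ Theorem~\ref{negative}) to pass from $y\in x^{+}$ to $\rho_+(x,y)\geq 0$; for necessity it again routes through $y\in x^{+}$ and derives a contradiction from the assumption that every maximal coordinate has $(sgn~x_{j_k})y_{j_k}<0$. You instead localize to $I_x=\{i:|x_i|=\|x\|\}$ and derive the closed forms $\rho_+(x,y)=\max_{i\in I_x}(sgn~x_i)y_i$ and $\rho_-(x,y)=\min_{i\in I_x}(sgn~x_i)y_i$, from which both parts drop out at once; this is cleaner, makes the two directions of each equivalence symmetric, and yields the exact value of the derivatives as a bonus (it is the expected formula $\rho_+(x,y)=\max_{f\in J(x)}f(y)$ specialized to $\ell_\infty^n$). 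One step needs tightening: your $\mu_j$ only guarantees $|x_j+\lambda y_j|<\|x\|$ for $j\notin I_x$, which does not by itself make those coordinates subdominant, since for $\lambda$ with $\lambda\max_{i\in I_x}(sgn~x_i)y_i<0$ the maximum over $I_x$ also dips below $\|x\|$ and an outside coordinate could overtake it (e.g.\ $x=(1,0.99)$, $y=(-1,0)$, $\lambda=0.02$). The fix is routine: since $|x_i+\lambda y_i|\geq\|x\|-|\lambda|\,\|y\|$ for $i\in I_x$ while $|x_j+\lambda y_j|\leq|x_j|+|\lambda|\,\|y\|$, choosing $\mu_j<(\|x\|-|x_j|)/(2\|y\|+1)$ restores strict subdominance and the displayed identity for $\|x+\lambda y\|_\infty$; with that adjustment the argument, including the $\max\leftrightarrow\min$ sign bookkeeping and the $x=0$ case, is complete. (The paper's own choice of a ``sufficiently small'' $\lambda_1$ in its necessity argument implicitly requires the same refinement.)
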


\begin{proof}
$(a)$ If $x=0$ then the result follows trivially. Now, we assume
that $x\not=0$. We first prove the sufficient part. It follows from
Theorem~\ref{positive} that to prove the sufficient part it is
enough to show that $y\in x^+$.  Let $1\leq i_0\leq n$ be such that
$\|x\|=|x_{i_0}|$, $(sgn~ x_{i_0}) y_{i_0} \geq 0$. Then for any
$\lambda\geq 0$, we have,
\begin{align*}
\|x+\lambda y\|&=\max_{1\leq i \leq n}|x_i+\lambda y_i|\\&\geq
|x_{i_0}+\lambda y_{i_0}|\\&=||x_{i_0}|+\lambda (sgn~ x_{i_0})
y_{i_0}|\\&\geq |x_{i_0}|+\lambda (sgn~ x_{i_0}) y_{i_0}\\&\geq
|x_{i_0}|\\&=\|x\|.
\end{align*}
Thus for all $\lambda\geq 0$, $\|x+\lambda y\|\geq \|x\|$ and hence
the sufficient part follows.

We now prove the necessary part. Let $\rho_+(x,y)\geq 0$, then it
follows from Theorem~\ref{positive} that $y\in x^+$. Let $\{
j_1,\ldots,j_m\}\subseteq \{1,\ldots,n\}$, $m\leq n$, be the maximal
subset such that $\|x\|=|x_{j_k}|$ for all $1\leq k \leq m$. Suppose
on the contrary that $(sgn~x_{j_k}) y_{j_k}<0$ for all $1\leq k\leq
m$. We can choose sufficiently small $\lambda_1>0$ and $1\leq l\leq
m$ such that for all $\lambda\in (0,\lambda_1]$, we have,
\begin{align*}
\|x+\lambda y\|=\max_{1\leq i\leq n}|x_i+\lambda
y_i|=|x_{j_l}|+\lambda (sgn~x_{j_l}) y_{j_l}<|x_{j_l}|=\|x\|.
\end{align*}

This leads to a contradiction and thus the necessary part follows.

Proof of $(b)$ follows from similar arguments as above.
\end{proof}

\begin{theorem}
Let $x=(x_1,\ldots,x_n),y=(y_1,\ldots,y_n)\in \ell_1^n$. Then
$\rho_+(x,y)\geq 0$ if and only either of the following hold true:
\begin{itemize}
\item[(a)] $\{i:1\leq i\leq n,~ x_i\not=0~\mbox{and}~y_i\not=0\}=\emptyset$.\\
\item[(b)] $\{i:1\leq i\leq n,~
x_i\not=0~\mbox{and}~y_i\not=0\}\not=\emptyset$ and
\begin{align*}
\sum_{\substack{i=1\\x_i\not=0}}^n (sgn~ x_i) y_i+
\sum_{\substack{i=1\\x_i=0}}^n |y_i| \geq 0.
\end{align*}
\end{itemize}
\end{theorem}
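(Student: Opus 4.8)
The plan is to reduce everything to the criterion $\rho_+(x,y)\ge 0 \iff y\in x^+$ from Theorem~\ref{positive}, and then compute $\|x+\lambda y\|$ explicitly for small $\lambda>0$ in $\ell_1^n$. Write $\|x+\lambda y\|_1=\sum_{i=1}^n |x_i+\lambda y_i|$. For indices with $x_i\ne 0$, the simple observation stated just before the previous theorem gives $|x_i+\lambda y_i|=|x_i|+\lambda(\mathrm{sgn}\,x_i)y_i$ for all sufficiently small $\lambda\ge 0$; for indices with $x_i=0$, $|x_i+\lambda y_i|=\lambda|y_i|$ for $\lambda\ge 0$. Hence there is $\lambda_0>0$ such that for all $\lambda\in[0,\lambda_0]$,
\[
\|x+\lambda y\|_1=\|x\|_1+\lambda\Bigl(\sum_{\substack{i=1\\x_i\ne 0}}^n(\mathrm{sgn}\,x_i)y_i+\sum_{\substack{i=1\\x_i=0}}^n|y_i|\Bigr).
\]
This makes the whole statement essentially a one-line computation once the right setup is in place.

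First I would handle case (a): if there is no index $i$ with both $x_i\ne 0$ and $y_i\ne 0$, then I claim $\rho_+(x,y)\ge 0$ always. Indeed, on the support of $x$ one has $y_i=0$, so $(\mathrm{sgn}\,x_i)y_i=0$ there, and the remaining sum $\sum_{x_i=0}|y_i|$ is manifestly $\ge 0$; plugging into the displayed formula gives $\|x+\lambda y\|_1\ge \|x\|_1$ for $\lambda\in[0,\lambda_0]$, and convexity of the norm (as in Case I of the proof of Theorem~\ref{positive}) promotes this to all $\lambda\ge 0$, so $y\in x^+$ and Theorem~\ref{positive} yields $\rho_+(x,y)\ge 0$. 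Conversely I must argue that case (a) genuinely does \emph{not} constrain $y$: the "if either holds" structure means (a) alone suffices, which is exactly what was just shown; there is nothing to prove in the reverse direction for (a).

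Next, for case (b), with $S:=\{i: x_i\ne 0,\ y_i\ne 0\}\ne\emptyset$ and the quantity $Q:=\sum_{x_i\ne 0}(\mathrm{sgn}\,x_i)y_i+\sum_{x_i=0}|y_i|$, the displayed formula gives $\rho_+(x,y)=Q$ directly (divide the increment by $\lambda$ and let $\lambda\to 0^+$). Hence $\rho_+(x,y)\ge 0 \iff Q\ge 0$, which is precisely (b). For the overall equivalence I would organize it as: if $\rho_+(x,y)\ge 0$ and (a) fails, then $S\ne\emptyset$ and by the formula $Q=\rho_+(x,y)\ge 0$, so (b) holds; conversely if (a) holds we are done by the first paragraph's argument, and if (b) holds then $\rho_+(x,y)=Q\ge 0$. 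The only mild subtlety — the place where a little care is needed rather than a real obstacle — is justifying the uniform choice of $\lambda_0$: it must work simultaneously for all finitely many indices $i$ with $x_i\ne 0$, which is fine since a finite minimum of positive numbers is positive, and that the norm-expansion identity is exact (not just first-order) on $[0,\lambda_0]$, so that convexity is not even needed in case (b). I expect no genuine difficulty; the entire proof rests on the elementary piecewise-linearity of $t\mapsto |a+t b|$ near a nonzero $a$ together with Theorem~\ref{positive}.
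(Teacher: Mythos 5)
Your proof is correct and rests on the same key computation as the paper's: the exact expansion $\|x+\lambda y\|_1=\|x\|_1+\lambda Q$ for all sufficiently small $\lambda>0$, where $Q$ denotes the sum appearing in (b). The only difference is organizational — you read off $\rho_+(x,y)=Q$ directly from that identity (making Theorem~\ref{positive} and the convexity step in your case (a) strictly unnecessary), whereas the paper routes both implications through the condition $y\in x^+$, proving necessity by contradiction and sufficiency via the inequality $\bigl||x_i|+\lambda(sgn~x_i)y_i\bigr|\geq |x_i|+\lambda(sgn~x_i)y_i$ for all $\lambda\geq 0$; both arguments are valid.
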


\begin{proof}
For the necessary part  assume that $\rho_+(x,y)\geq 0$. Now, from
Theorem~\ref{positive}, it follows that $y\in x^+$. If $x$, $y$ are
such that $\{i:1\leq i\leq n,~
x_i\not=0~\mbox{and}~y_i\not=0\}=\emptyset$ then $(a)$ follows. Now,
let $\{i:1\leq i\leq n,~
x_i\not=0~\mbox{and}~y_i\not=0\}\not=\emptyset$. Suppose on the
contrary that $\sum\limits_{\substack{i=1\\x_i\not=0}}^n (sgn~ x_i)
y_i+ \sum\limits_{\substack{i=1\\x_i=0}}^n |y_i| < 0$.

We can find sufficiently small $\lambda_1>0$ such that for all
$\lambda \in (0,\lambda_1]$, we have,
\begin{align*}
\|x+\lambda y\|&=\sum_{\substack{i=1\\x_i\not=0}}^n |x_i+\lambda
               y_i|+\lambda
               \sum_{\substack{i=1\\x_i=0}}^n |y_i|\\
               &=\sum_{\substack{i=1\\x_i\not=0}}^n ||x_i|+\lambda(sgn~x_i)
               y_i|+\lambda
               \sum_{\substack{i=1\\x_i=0}}^n |y_i|\\
               &=\sum_{\substack{i=1\\x_i\not=0}}^n (|x_i|+\lambda(sgn~x_i)
               y_i)+\lambda
               \sum_{\substack{i=1\\x_i=0}}^n |y_i|\\
               &=\|x\|+\lambda (\sum_{\substack{i=1\\x_i\not=0}}^n (sgn~ x_i) y_i+
               \sum_{\substack{i=1\\x_i=0}}^n |y_i|)\\
               &<\|x\|.
\end{align*}
This contradicts that $y\in x^+$. Thus $(b)$ follows.

Now we prove the sufficient part. Let $(a)$ holds true. Then it
follows easily that $x\perp_B y$, $i.e.$, $\|x+\lambda y\|\geq
\|y\|$ for all $\lambda\in \mathbb{R}$. Now, it follows from the
definition of $\rho_+(x,y)$ that $\rho_+(x,y)\geq 0$.  We now assume
that $(b)$ holds true. Observe that from Theorem~\ref{positive}, it
is sufficient to show that $y\in x^+$. If $\lambda\geq 0$ then
\begin{align*}
\|x+\lambda y\|&=\sum_{i=1}^n|x_i+\lambda y_i| \\
               &=\sum_{\substack{i=1\\x_i\not=0}}^n |x_i+\lambda
               y_i|+\lambda
               \sum_{\substack{i=1\\x_i=0}}^n |y_i|\\
               &=\sum_{\substack{i=1\\x_i\not=0}}^n ||x_i|+\lambda(sgn~x_i)
               y_i|+\lambda
               \sum_{\substack{i=1\\x_i=0}}^n |y_i|\\
               &\geq  \sum_{\substack{i=1\\x_i\not=0}}^n (|x_i|+\lambda(sgn~x_i)
               y_i)+\lambda
               \sum_{\substack{i=1\\x_i=0}}^n |y_i|\\
               &=\|x\|+ \lambda (\sum_{\substack{i=1\\x_i\not=0}}^n (sgn~x_i) y_i+
               \sum_{\substack{i=1\\x_i=0}}^n |y_i|)\\
               &\geq\|x\|.
\end{align*}
Thus $y\in x^+$ and $\rho_+(x,y)\geq 0$.
\end{proof}

Similar arguments yields the following analogous characterization of
$\rho_-(x,y)\leq 0$ for $x,y\in\ell_1^n$.

\begin{theorem}
Let $x=(x_1,\ldots,x_n),y=(y_1,\ldots,y_n)\in \ell_1^n$. Then
$\rho_-(x,y)\leq 0$ if and only either of the following hold true:
\begin{itemize}
\item[(a)] $\{i:1\leq i\leq n,~ x_i\not=0~\mbox{and}~y_i\not=0\}=\emptyset$.\\
\item[(b)] $\{i:1\leq i\leq n,~
x_i\not=0~\mbox{and}~y_i\not=0\}\not=\emptyset$ and
\begin{align*}
\sum_{\substack{i=1\\x_i\not=0}}^n (sgn~ x_i) y_i+
\sum_{\substack{i=1\\x_i=0}}^n |y_i| \leq 0.
\end{align*}
\end{itemize}
\end{theorem}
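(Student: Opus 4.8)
The plan is to reproduce the proof of the preceding theorem with the evident substitutions: $\rho_+$ replaced by $\rho_-$, the positive part $x^+$ replaced by the negative part $x^-$, and the range $\lambda \geq 0$ replaced by $\lambda \leq 0$. The first step is to invoke Theorem~\ref{negative}, which lets us replace ``$\rho_-(x,y)\leq 0$'' throughout by the equivalent condition ``$y\in x^-$'', i.e. $\|x+\lambda y\|\geq\|x\|$ for all $\lambda\leq 0$; once this reduction is in place, the whole statement becomes a direct $\ell_1$-norm computation. (More quickly, property (i) gives $\rho_-(x,y)=-\rho_+(x,-y)$, so applying the preceding theorem to the pair $(x,-y)$ yields the conclusion at once; the hands-on computation below is the one meant by the phrase ``similar arguments''.) Throughout one uses the elementary remark recorded just before the $\ell_\infty^n$ theorem: if $x_i\neq 0$ then $|x_i+\lambda y_i|=|x_i|+\lambda\,(sgn~x_i)\,y_i$ for all sufficiently small $|\lambda|$, while $|x_i+\lambda y_i|=|\lambda|\,|y_i|$ whenever $x_i=0$; note also the universally valid lower bound $|x_i+\lambda y_i|\geq|x_i|+\lambda\,(sgn~x_i)\,y_i$ for $x_i\neq 0$ and all $\lambda$.

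For the necessity, assume $y\in x^-$. If the index set $\{i:x_i\neq 0 \text{ and } y_i\neq 0\}$ is empty, case (a) holds and we are done. Otherwise, argue by contradiction, assuming the inequality in (b) fails. As in the preceding proof, choose $\lambda_1>0$ small enough that $x_i+\lambda y_i$ retains the sign of $x_i$ for every $i$ with $x_i\neq 0$ and every $\lambda\in[-\lambda_1,0)$; splitting $\|x+\lambda y\|_1$ over the coordinates with $x_i\neq 0$ and those with $x_i=0$, and using $|\lambda y_i|=-\lambda|y_i|$ for $\lambda<0$, one finds for all such $\lambda$ that $\|x+\lambda y\|=\|x\|+\lambda\,\delta$, where $\delta$ is the real number on the left-hand side of the inequality in (b). Since $\lambda<0$ and, by assumption, $\delta$ has the sign that makes (b) fail, this gives $\|x+\lambda y\|<\|x\|$ for all small negative $\lambda$, contradicting $y\in x^-$; hence (b) holds.

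For the sufficiency, if (a) holds then at each coordinate at least one of $x_i,y_i$ vanishes, so $|x_i+\lambda y_i|\geq|x_i|$ for all real $\lambda$, whence $\|x+\lambda y\|\geq\|x\|$, i.e. $x\perp_B y$; in particular $y\in x^-$, and Theorem~\ref{negative} gives $\rho_-(x,y)\leq 0$. If (b) holds, fix any $\lambda\leq 0$ and estimate coordinatewise, using $|x_i+\lambda y_i|\geq|x_i|+\lambda\,(sgn~x_i)\,y_i$ when $x_i\neq 0$ and $|x_i+\lambda y_i|=-\lambda|y_i|$ when $x_i=0$; summing gives $\|x+\lambda y\|\geq\|x\|+\lambda\,\delta$, and this is $\geq\|x\|$ since $\lambda\leq 0$ and (b) forces $\delta\leq 0$. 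Thus $y\in x^-$, and once more Theorem~\ref{negative} yields $\rho_-(x,y)\leq 0$.

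No genuine difficulty is anticipated: the argument runs word for word parallel to the proof of the $\rho_+$-version, with the left-hand derivative, the negative part, and negative $\lambda$ everywhere in place of their right-hand/positive counterparts. The only point that needs (routine) attention is choosing the threshold $\lambda_1$ simultaneously for the finitely many coordinates with $x_i\neq 0$, so that each $x_i+\lambda y_i$ keeps the sign of $x_i$ throughout $[-\lambda_1,0)$ --- exactly the one-variable observation made just before the $\ell_\infty^n$ theorem, now used on the negative side.
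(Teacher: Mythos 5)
Your sufficiency half is fine, but the necessity half breaks down at its central displayed identity, and it does so precisely at the point where the ``evident substitution'' of $\lambda\leq 0$ for $\lambda\geq 0$ is \emph{not} sign-neutral. For $\lambda<0$ the coordinates with $x_i=0$ contribute $|\lambda y_i|=-\lambda|y_i|$ (as you yourself note), so the correct expansion for all sufficiently small $\lambda<0$ is $\|x+\lambda y\|=\|x\|+\lambda\bigl(\sum_{\substack{i,\,x_i\neq 0}}(sgn~x_i)y_i-\sum_{\substack{i,\,x_i= 0}}|y_i|\bigr)$, \emph{not} $\|x\|+\lambda\delta$ with $\delta$ equal to the left-hand side of (b). Consequently $\rho_-(x,y)=\sum_{x_i\neq 0}(sgn~x_i)y_i-\sum_{x_i=0}|y_i|$, and the condition actually equivalent to $\rho_-(x,y)\leq 0$ is $\sum_{x_i\neq 0}(sgn~x_i)y_i\leq\sum_{x_i=0}|y_i|$. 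This is exactly what your own parenthetical shortcut $\rho_-(x,y)=-\rho_+(x,-y)$ yields if you carry it out: applying the preceding theorem to $(x,-y)$ flips the sign of the first sum but leaves $\sum_{x_i=0}|{-y_i}|=\sum_{x_i=0}|y_i|$ unchanged. The two conditions differ whenever $\sum_{x_i=0}|y_i|>0$, and the necessity you are trying to prove is in fact false as the statement is printed: for $x=(1,0)$, $y=(1,1)$ in $\ell_1^2$ one has $\|x+\lambda y\|=|1+\lambda|+|\lambda|=1$ for all $\lambda\in(-1,0)$, so $\rho_-(x,y)=0\leq 0$, yet the index set in (a) is $\{1\}\neq\emptyset$ and the sum in (b) equals $1+1=2>0$.

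So the gap is not a matter of choosing $\lambda_1$ more carefully; the printed statement (for which the paper offers no proof beyond ``similar arguments'') carries the wrong sign on $\sum_{x_i=0}|y_i|$, and your computation inherited that sign from the target rather than deriving it from the norm. Had you actually executed the reduction to the $\rho_+$ theorem via $(x,-y)$, the discrepancy would have surfaced immediately. Your sufficiency argument survives only because condition (b) as printed is \emph{stronger} than the correct one: $\delta\leq 0$ implies $\sum_{x_i\neq 0}(sgn~x_i)y_i-\sum_{x_i=0}|y_i|\leq 0$, hence $y\in x^-$; the converse implication, which is what necessity requires, fails.
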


In \cite{PSJ}, Paul et al. observed that a normed linear space $X$
is strictly convex if and only if $x,y\in S_X$ and $x\perp_B y$
implies that $x\perp_{SB} y$. Thus for $\ell_p^n$ spaces, where
$1<p<\infty$, Birkhoff-James orthogonality and strong
Birkhoff-James orthogonality coincide. For $\ell^n_1$ or
$\ell^n_{\infty}$, characterization of strong Birkhoff-James
orthogonality is not known. Our next result provides a complete
characterization of strong Birkhoff-James orthogonality, involving
the norm derivatives $\rho_{\pm}$ in $\ell_1^n$ and
$\ell^n_{\infty}$ spaces.

\begin{theorem}\label{strong BJ}
Let $X=\ell^n_1$ or $\ell^n_{\infty}$, and let $x,y\in S_X$. Then
$x\perp_{SB}y$ if and only if the following two conditions hold
true:

\begin{itemize}
\item[(a)] $\rho_+(x,y)>0$,
\item[(b)] $\rho_-(x,y)<0$.
\end{itemize}
\end{theorem}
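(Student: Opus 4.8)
The plan is to treat the two implications separately, since the ``if'' direction holds in every normed space while the converse genuinely exploits the polyhedral nature of $\ell_1^n$ and $\ell_\infty^n$.

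For the sufficiency part I would argue in an arbitrary normed space. Put $\phi(\lambda)=\|x+\lambda y\|$; this is convex, so the difference quotient $\lambda\mapsto(\phi(\lambda)-\phi(0))/\lambda$ is non-decreasing, and its one-sided limits at $0$ are $\rho_+(x,y)$ and $\rho_-(x,y)$. Hence $\phi(\lambda)\ge\phi(0)+\rho_+(x,y)\lambda$ for $\lambda\ge 0$ and $\phi(\lambda)\ge\phi(0)+\rho_-(x,y)\lambda$ for $\lambda\le 0$. If $(a)$ and $(b)$ hold, then for $\lambda>0$ we get $\|x+\lambda y\|\ge\|x\|+\rho_+(x,y)\lambda>\|x\|$, and for $\lambda<0$ we get $\|x+\lambda y\|\ge\|x\|+\rho_-(x,y)\lambda>\|x\|$ (a product of two negative numbers). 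Thus $\|x+\lambda y\|>\|x\|$ for all $\lambda\ne 0$, that is, $x\perp_{SB}y$.

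For the necessity part, suppose $x\perp_{SB}y$ with $X=\ell_1^n$ or $\ell_\infty^n$. Then $\|x+\lambda y\|\ge\|x\|$ for all $\lambda$, so $x\perp_B y$, and Theorem~\ref{positive} and Theorem~\ref{negative} already give $\rho_+(x,y)\ge 0$ and $\rho_-(x,y)\le 0$; the point is to rule out equality. Here I would use that $\phi(\lambda)=\|x+\lambda y\|$ is piecewise linear in $\lambda$: in $\ell_1^n$ it is a finite sum $\sum_{i=1}^n|x_i+\lambda y_i|$ and in $\ell_\infty^n$ a finite maximum $\max_{1\le i\le n}|x_i+\lambda y_i|$ of functions each having at most one breakpoint. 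So there is $\delta>0$ with $\phi$ affine on $[0,\delta]$, and the slope of that affine piece equals the right derivative $\rho_+(x,y)$. If $\rho_+(x,y)=0$ this forces $\|x+\delta y\|=\phi(\delta)=\phi(0)=\|x\|$ with $\delta>0$, contradicting strong orthogonality; hence $\rho_+(x,y)>0$. Taking instead $\delta>0$ with $\phi$ affine on $[-\delta,0]$, the equality $\rho_-(x,y)=0$ would likewise force $\|x-\delta y\|=\|x\|$, again contradicting $x\perp_{SB}y$; hence $\rho_-(x,y)<0$. This gives $(a)$ and $(b)$.

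The only delicate point is this converse, and identifying why $\ell_1^n$ and $\ell_\infty^n$ are the right setting: the statement fails in a general normed space — in an inner product space every pair with $x\perp_B y$ satisfies $x\perp_{SB}y$ yet $\rho_+(x,y)=\rho_-(x,y)=0$ — so the proof must use something special about these spaces. That something is precisely the piecewise linearity of $\lambda\mapsto\|x+\lambda y\|$, which upgrades ``$\rho_+(x,y)=0$ together with $x\perp_B y$'' to ``$\phi$ is constant on a one-sided neighbourhood of $0$'', and the latter is incompatible with strong orthogonality. Everything else is routine: checking that these two norms are polyhedral and that the slope of the first affine piece of $\phi$ to the right (resp.\ left) of $0$ is $\rho_+(x,y)$ (resp.\ $\rho_-(x,y)$); alternatively the same estimates can be carried out coordinatewise, reading off $\rho_\pm(x,y)$ from the computations behind the explicit characterizations of $\rho_\pm$ in $\ell_1^n$ and $\ell_\infty^n$ obtained above.
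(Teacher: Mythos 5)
Your proof is correct and follows essentially the same route as the paper: sufficiency via convexity of $\lambda\mapsto\|x+\lambda y\|$ in an arbitrary normed space, and necessity by showing that in $\ell_1^n$ and $\ell_\infty^n$ this function is affine on a one-sided neighbourhood of $0$ with slope $\rho_\pm(x,y)$, so a vanishing one-sided derivative would make it locally constant, contradicting $x\perp_{SB}y$. The only difference is presentational: the paper establishes the local affineness by the explicit coordinatewise formulas $\|x+\lambda y\|=\|x\|+\lambda a$, whereas you invoke piecewise linearity abstractly, as you yourself note at the end.
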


\begin{proof}
Let us first prove the necessary part of the theorem. Let
$x\perp_{SB}y$. Then $y\in x^+$ and $y\in x^-$. Now, it follows from
Theorem~\ref{positive} and Theorem~\ref{negative} that
$\rho_+(x,y)\geq 0$, $\rho_-(x,y)\leq 0$. Suppose on the contrary
that $\rho_+(x,y)=0$.

If $X=\ell_1^n$ then we can find a sufficiently small $\lambda_1>0$
such that for all $\lambda \in (0,\lambda_1]$, we have,
\begin{align*}
\|x+\lambda y\|&=\sum_{\substack{i=1\\x_i\not=0}}^n |x_i+\lambda
               y_i|+\lambda
               \sum_{\substack{i=1\\x_i=0}}^n |y_i|\\
               &=\sum_{\substack{i=1\\x_i\not=0}}^n ||x_i|+\lambda(sgn~x_i)
               y_i|+\lambda
               \sum_{\substack{i=1\\x_i=0}}^n |y_i|\\
               &=\sum_{\substack{i=1\\x_i\not=0}}^n (|x_i|+\lambda(sgn~x_i)
               y_i)+\lambda
               \sum_{\substack{i=1\\x_i=0}}^n |y_i|\\
               &=\|x\|+\lambda (\sum_{\substack{i=1\\x_i\not=0}}^n (sgn~ x_i) y_i+
               \sum_{\substack{i=1\\x_i=0}}^n |y_i|).
\end{align*}

If $X=\ell_\infty^n$ then we can find a sufficiently small
$\lambda_2>0$, $1\leq i_0\leq n$, such that $\|x\|=|x_{i_0}|$ and
for all $\lambda \in (0,\lambda_1]$, we have,
\begin{align*}
\|x+\lambda y\|&=\max_{1\leq i \leq n}|x_i+\lambda
y_i|\\&=\max_{1\leq i \leq n}||x_i|+\lambda (sgn
~x_i)y_i|\\&=|x_{i_0}|+\lambda (sgn~ x_{i_0})
y_{i_0}\\&=\|x\|+\lambda (sgn~ x_{i_0}) y_{i_0}.
\end{align*}

Thus in both cases we can find a sufficiently small $\alpha>0$ such
that for all $\lambda\in (0,\alpha]$, we have,
\begin{align*}
\|x+\lambda y\|=\|x\|+\lambda a,
\end{align*}
where
\[a = \left\{
    \begin{array}{ll}
        \sum\limits_{\substack{i=1\\x_i\not=0}}^n (sgn~ x_i) y_i+
               \sum\limits_{\substack{i=1\\x_i=0}}^n |y_i|  & \mbox{if } x \in\ell_1^n \\
        (sgn~
x_{i_0}) y_{i_0}\mbox{~for~some }1\leq i_0\leq n &\mbox{if }
x\in\ell_\infty^n.
    \end{array}
\right.\]

Now, the assumption $\rho_+(x,y)=0$ implies that $a=0$ and thus
$\|x+\lambda y\|-\|x\|=0$ for all $\lambda \in (0,\alpha]$. This
contradicts the assumption that $x\perp_{SB}y$. Thus
$\rho_+(x,y)>0$. Using similar arguments we can show that
$\rho_-(x,y)<0$.

Let us now prove the sufficient part of the theorem. If
$\rho_+(x,y)>0$ then we can find a sufficiently small $\lambda_3>0$
such that for all $\lambda \in (0,\lambda_3]$, we have,
\begin{equation*}
\|x+\lambda y\|-\|x\|>0.
\end{equation*}

Now, by the convexity of the norm we obtain that
\begin{equation*}
\|x+\lambda y\|>\|x\|
\end{equation*}

for all $\lambda>0$. Similar arguments as above show that if
$\rho_-(x,y)<0$ then we can find a sufficiently small $\lambda_4<0$
such that for all $\lambda \in [\lambda_4,0)$, we have,
\begin{equation*}
\|x+\lambda y\|-\|x\|>0.
\end{equation*}
Now, again by the convexity of the norm we obtain that
\begin{equation*}
\|x+\lambda y\|>\|x\|
\end{equation*}
for all $\lambda<0$. Thus $\|x+\lambda y\|>\|x\|$ for all
$\lambda\not=0.$ This proves the sufficient part of the theorem.
\end{proof}

As a corollary to the above result, we obtain the following complete
characterization of strong Birkhoff-orthogonality in $X\oplus_1Y$,
where $X=\ell_1^n$ or $\ell^n_{\infty}$,  $Y=\ell_1^m$ or
$\ell^m_{\infty}$ and $X\oplus_1Y=\{(x,y):x\in X,y\in Y\}$ is a
Banach space with the norm $\|(x,y)\|=\|x\|+\|y\|$.

\begin{cor}
Let $X=\ell^n_1$ or $\ell^n_{\infty}$, and let $Y=\ell_1^m$ or
$\ell^m_{\infty}.$ Let $(x,y),(u,v)\in X\oplus_1 Y$. Then
$(x,y)\perp_{SB}(u,v)$ if and only if the following two conditions
hold true:

\begin{itemize}
\item[(a)] $\rho_+(x,u)+ \rho_+(y,v)>0$,
\item[(b)] $\rho_-(x,u)+\rho_-(y,v)<0$.
\end{itemize}
\end{cor}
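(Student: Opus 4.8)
The plan is to reduce the statement to Theorem~\ref{strong BJ} by exploiting the behaviour of the norm derivatives under $\ell_1$-sums. First I would record the elementary identity
\[\|(x,y)+\lambda(u,v)\|=\|x+\lambda u\|+\|y+\lambda v\|\]
valid for every $\lambda\in\mathbb{R}$; dividing by $\lambda$, subtracting $\|(x,y)\|=\|x\|+\|y\|$, and letting $\lambda\to 0^{\pm}$ (each one-sided limit on the right exists, being the limit of a monotone quotient by convexity of the respective norms), one gets $\rho_{\pm}((x,y),(u,v))=\rho_{\pm}(x,u)+\rho_{\pm}(y,v)$. Hence conditions $(a)$ and $(b)$ are exactly $\rho_{+}((x,y),(u,v))>0$ and $\rho_{-}((x,y),(u,v))<0$, and it suffices to establish the analogue of Theorem~\ref{strong BJ} for the space $X\oplus_1 Y$.

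The point is that $X\oplus_1 Y$ need not be an $\ell_1^{N}$ or $\ell_\infty^{N}$ space (e.g.\ $\ell_\infty^n\oplus_1\ell_\infty^m$ is neither), so Theorem~\ref{strong BJ} cannot be quoted verbatim; instead I would isolate the structural fact that its proof actually uses. In both $\ell_1^n$ and $\ell_\infty^n$, for any two vectors $a,b$ there is $\alpha>0$ such that $\lambda\mapsto\|a+\lambda b\|$ is affine on $(0,\alpha]$ with slope $\rho_{+}(a,b)$ and affine on $[-\alpha,0)$ with slope $\rho_{-}(a,b)$ --- this is precisely the explicit computation carried out in the proof of Theorem~\ref{strong BJ}. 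Applying it in $X$ to $x,u$ and in $Y$ to $y,v$ and adding the two expressions, one obtains $\alpha>0$ with
\[\|(x,y)+\lambda(u,v)\|=\|(x,y)\|+\lambda\bigl(\rho_{+}(x,u)+\rho_{+}(y,v)\bigr)\quad\text{for }\lambda\in(0,\alpha],\]
and the symmetric statement with $\rho_{-}$ on $[-\alpha,0)$. Thus $X\oplus_1 Y$ inherits this one-sided polyhedral behaviour from its summands in all four combinations (one of which, $\ell_1^n\oplus_1\ell_1^m=\ell_1^{n+m}$, is of course already covered directly).

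With these two ingredients the argument transcribes the proof of Theorem~\ref{strong BJ}. For necessity, $(x,y)\perp_{SB}(u,v)$ forces $(u,v)\in(x,y)^{+}\cap(x,y)^{-}$, so Theorems~\ref{positive} and~\ref{negative} give $\rho_{+}((x,y),(u,v))\ge0$ and $\rho_{-}((x,y),(u,v))\le0$; were either an equality, the displayed one-sided-affine expression would be constant on a one-sided neighbourhood of $0$, contradicting strong orthogonality, so both inequalities are strict. For sufficiency, $\rho_{+}((x,y),(u,v))>0$ makes $\|(x,y)+\lambda(u,v)\|>\|(x,y)\|$ for all small $\lambda>0$, hence for all $\lambda>0$ by convexity of the norm, and $\rho_{-}((x,y),(u,v))<0$ similarly handles $\lambda<0$; together these give $(x,y)\perp_{SB}(u,v)$. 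The only step I expect to require genuine care is the second one: checking that the one-sided affineness really does survive the $\ell_1$-sum in each of the four cases, and being explicit that it is this polyhedral behaviour --- not membership in a fixed $\ell_p^{N}$ class --- that powers Theorem~\ref{strong BJ}. The additivity of $\rho_{\pm}$ over $\oplus_1$ and the convexity arguments are routine.
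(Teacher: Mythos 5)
Your argument is correct, and it supplies exactly what the paper leaves unsaid: the corollary is stated with no proof at all, merely as an immediate consequence of Theorem~\ref{strong BJ}, so the intended route is surely the one you take (additivity of the one-sided derivatives over the $\ell_1$-sum, $\rho_{\pm}((x,y),(u,v))=\rho_{\pm}(x,u)+\rho_{\pm}(y,v)$, followed by the characterization of $\perp_{SB}$ in terms of the signs of $\rho_{\pm}$). Your one substantive addition is the observation that Theorem~\ref{strong BJ} cannot literally be invoked for $X\oplus_1 Y$, since for instance $\ell_\infty^n\oplus_1\ell_\infty^m$ is neither an $\ell_1^N$ nor an $\ell_\infty^N$ space, and that what actually powers the necessity direction is the local one-sided affineness of $\lambda\mapsto\|a+\lambda b\|$ near $0$ (with slopes $\rho_{\pm}(a,b)$), a polyhedral property that each summand possesses and that is preserved under addition of the two norms. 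That is a genuine and correct refinement: the sufficiency direction needs only convexity and holds in any normed space, whereas the necessity direction would fail without the affine structure (in a strictly convex space one can have $x\perp_{SB}y$ with $\rho_{+}(x,y)=0$), so isolating this property is not pedantry but the actual content of the reduction. The paper's presentation glosses over this; your version is the more careful one and buys a statement that visibly extends to any finite $\ell_1$-sum of polyhedral spaces.
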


We would like to conclude this section by obtaining  complete
characterizations of $\rho_+$ orthogonality and $\rho_-$
orthogonality in normed linear space.  First we recall the following
result from \cite{M}, presenting a relationship between functionals
supporting $B_X$ at $x$ and $\rho_{\pm}(x,.)$.

\begin{lemma}\cite[Lemma 5.4.16]{M} \label{normderivative}
Let $X$ be a normed linear space and let $x\in S_X$. Then $f\in
S_{X^*}$ is a supporting functional at  $x$ if and only if for any
$y\in X,$
\begin{align*}
\rho_{-}(x,y)\leq f(y) \leq \rho_+(x,y).
\end{align*}
\end{lemma}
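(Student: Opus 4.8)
The plan is to run the standard convexity argument: view $\lambda\mapsto\|x+\lambda y\|$ as a convex function of the real variable $\lambda$ and compare its one-sided difference quotients with the value $f(y)$. First I would record the elementary fact that for fixed $x,y\in X$ the map $\phi(\lambda)=\|x+\lambda y\|$ is convex on $\mathbb{R}$, so that the difference quotient $\lambda\mapsto\frac{\phi(\lambda)-\phi(0)}{\lambda}$ is non-decreasing on $\mathbb{R}\setminus\{0\}$. This yields both the existence of the one-sided limits, with $\rho_+(x,y)=\inf_{\lambda>0}\frac{\|x+\lambda y\|-\|x\|}{\lambda}$ and $\rho_-(x,y)=\sup_{\lambda<0}\frac{\|x+\lambda y\|-\|x\|}{\lambda}$, and the ``subgradient'' inequalities
\[
\|x+\lambda y\|-\|x\|\ \geq\ \lambda\,\rho_+(x,y)\quad(\lambda\geq 0),
\qquad
\|x+\lambda y\|-\|x\|\ \geq\ \lambda\,\rho_-(x,y)\quad(\lambda\leq 0),
\]
the second being obtained from $\frac{\|x+\lambda y\|-\|x\|}{\lambda}\leq\rho_-(x,y)$ upon multiplying by the negative number $\lambda$.

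For the forward implication, suppose $f\in S_{X^*}$ supports $B_X$ at $x$, i.e. $f(x)=\|x\|=1$. Then for every $\lambda\in\mathbb{R}$ we have $\|x+\lambda y\|\geq f(x+\lambda y)=\|x\|+\lambda f(y)$, that is, $\|x+\lambda y\|-\|x\|\geq\lambda f(y)$. Dividing by $\lambda>0$ and letting $\lambda\to 0^+$ gives $\rho_+(x,y)\geq f(y)$; dividing by $\lambda<0$, which reverses the inequality, and letting $\lambda\to 0^-$ gives $\rho_-(x,y)\leq f(y)$. As $y\in X$ was arbitrary, this is exactly the required two-sided estimate.

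For the converse, assume $\rho_-(x,y)\leq f(y)\leq\rho_+(x,y)$ for all $y\in X$. Taking $y=x$ and using that $\|x+\lambda x\|=|1+\lambda|\,\|x\|=1+\lambda$ for $\lambda$ near $0$, one computes $\rho_+(x,x)=\rho_-(x,x)=\|x\|=1$, whence $f(x)=\|x\|$; combined with the hypothesis $f\in S_{X^*}$ this says precisely that $f$ supports $B_X$ at $x$. (If one prefers not to assume $\|f\|=1$ in advance, it is forced anyway: from $f(y)\leq\rho_+(x,y)\leq\|y\|$ and $-f(y)=f(-y)\leq\rho_+(x,-y)\leq\|y\|$ we get $|f(y)|\leq\|y\|$, so $\|f\|\leq 1$, and then $f(x)=1=\|x\|$ gives $\|f\|=1$.)

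The only genuine content is the convexity/monotonicity step that produces the subgradient inequalities; the rest is a brief manipulation of one-sided limits, so I do not expect a serious obstacle. The one place to be careful is the sign bookkeeping when dividing $\|x+\lambda y\|-\|x\|\geq\lambda f(y)$ by a negative $\lambda$ — that reversal is exactly what makes the left derivative $\rho_-$, rather than $\rho_+$, appear on the lower side of the estimate.
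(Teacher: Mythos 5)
Your proof is correct. The paper does not prove this lemma at all --- it is quoted verbatim from Megginson \cite[Lemma 5.4.16]{M} --- and your argument (the convexity of $\lambda\mapsto\|x+\lambda y\|$ giving the subgradient inequalities, $\|x+\lambda y\|\geq f(x+\lambda y)$ for the forward direction, and $y=x$ plus $\rho_{\pm}(x,x)=\|x\|$ for the converse) is precisely the standard textbook proof of that cited result, so there is nothing to compare against and no gap to report.
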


For our next result characterizing $\rho_+$ and $\rho_-$
orthogonality, we recall the definition of a normal cone in normed
linear spaces. A subset $K$ of $ X $ is said to be a normal cone if

\noindent $(1)~ K + K \subset K,$\\
$(2)~ \alpha K \subset K $ for all $ \alpha \geq 0 $ and\\
$(3)~ K \cap (-K) = \{0\}. $

\begin{theorem}
Let $X$ be a Banach space and let $x,y\in S_X$. Then the following
results hold true:

\begin{itemize}
\item[(a)] $\rho_+(x,y)=0$ if and only if $(-x)\perp_{B^*}y$.

\item[(b)] $\rho_-(x,y)=0$ if and only if $x\perp_{B^*}y$.
\end{itemize}
\end{theorem}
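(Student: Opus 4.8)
The plan is to establish part (b) by a direct argument and then obtain part (a) from it by replacing $x$ with $-x$. Besides Theorem~\ref{positive} and Theorem~\ref{negative}, the key computational input is the identity
\begin{align*}
\rho_{\pm}(x,\,ty+(1-t)x)=t\,\rho_{\pm}(x,y)+(1-t)\qquad(x\in S_X,\ y\in X,\ t\in(0,1)).
\end{align*}
I would prove it by noting that for $|\lambda|$ small one has $1+\lambda(1-t)>0$, so that $x+\lambda(ty+(1-t)x)=(1+\lambda(1-t))(x+\mu y)$ with $\mu=\lambda t/(1+\lambda(1-t))$, where $\mu$ has the same sign as $\lambda$ and $\mu/\lambda\to t$ as $\lambda\to 0$. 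Rearranging,
\begin{align*}
\frac{\|x+\lambda(ty+(1-t)x)\|-\|x\|}{\lambda}=(1+\lambda(1-t))\cdot\frac{\mu}{\lambda}\cdot\frac{\|x+\mu y\|-\|x\|}{\mu}+(1-t),
\end{align*}
which converges to the asserted value as $\lambda\to 0^{\pm}$. I would also record two easy facts: $\rho_-(x,y)\le\rho_+(x,y)$ always (monotonicity of the difference quotients of the convex function $\lambda\mapsto\|x+\lambda y\|$); and, combining Theorem~\ref{positive}, Theorem~\ref{negative} with $x^{\perp}=x^+\cap x^-$, one has $x\perp_B u\iff\rho_-(x,u)\le 0\le\rho_+(x,u)$.

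For the forward implication of (b), assume $\rho_-(x,y)=0$. Then $\rho_+(x,y)\ge\rho_-(x,y)=0$, so by Theorem~\ref{positive} and Theorem~\ref{negative} we get $y\in x^+\cap x^-$, i.e.\ $x\perp_B y$. For any $u=ty+(1-t)x$ with $t\in(0,1)$ the identity above gives $\rho_-(x,u)=1-t>0$, so $u\notin x^-$ by Theorem~\ref{negative}, hence $x\not\perp_B u$; this is precisely $x\perp_{B^*}y$. For the converse, assume $x\perp_{B^*}y$; then $x\perp_B y$, so $y\in x^-$ and $\rho_-(x,y)\le 0$. If $\rho_-(x,y)=-c$ with $c>0$, set $t=\tfrac{1}{1+c}\in(0,1)$ and $u=ty+(1-t)x$. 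The identity gives $\rho_-(x,u)=t(-c)+(1-t)=0$, while $\rho_+(x,u)=t\,\rho_+(x,y)+(1-t)\ge 1-t>0$ since $\rho_+(x,y)\ge 0$ (because $y\in x^+$). Hence $\rho_-(x,u)\le 0\le\rho_+(x,u)$, so $x\perp_B u$, contradicting $x\perp_{B^*}y$; therefore $\rho_-(x,y)=0$.

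Part (a) then follows formally: $-x\in S_X$, so applying part (b) to $-x$ yields $(-x)\perp_{B^*}y\iff\rho_-(-x,y)=0$, and by property (i) of the norm derivatives (with $\alpha=-1$) we have $\rho_-(-x,y)=-\rho_+(x,y)$; hence $(-x)\perp_{B^*}y\iff\rho_+(x,y)=0$.

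I expect the only genuinely delicate point to be the verification of the segment identity $\rho_{\pm}(x,ty+(1-t)x)=t\rho_{\pm}(x,y)+(1-t)$ — in particular keeping track of the reparametrization $\mu=\mu(\lambda)$ and checking $\mu/\lambda\to t$ — while everything else reduces cleanly to Theorem~\ref{positive}, Theorem~\ref{negative} and the homogeneity properties of $\rho_{\pm}$.
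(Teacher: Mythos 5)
Your proof is correct, but it takes a genuinely different route from the paper's. The paper argues geometrically: it splits into cases according to whether $x$ is a smooth point, invokes the normal-cone decomposition $x^{\perp}\cap span\{x,y\}=K\cup(-K)$ from \cite{SPM1}, and uses supporting functionals together with Lemma~\ref{normderivative} to force $y$ to be a generator of the cone; its sufficiency direction further relies on a continuity argument producing some $z\in span\{x,y\}$ with $\rho_+(x,z)=0$. You instead reduce everything to the single translation identity $\rho_{\pm}(x,ty+(1-t)x)=t\rho_{\pm}(x,y)+(1-t)$ for $x\in S_X$ (a standard property of norm derivatives; your verification via the reparametrization $\mu=\lambda t/(1+\lambda(1-t))$, using $\|x\|=1$ and $\mu/\lambda\to t$, is sound), combined with Theorem~\ref{positive}, Theorem~\ref{negative} and the convexity inequality $\rho_-\le\rho_+$. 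This makes the segment condition in the definition of $\perp_{B^*}$ completely transparent: $\rho_-(x,u)=t\rho_-(x,y)+(1-t)$ is strictly positive for every $t\in(0,1)$ precisely when $\rho_-(x,y)=0$, while $\rho_-(x,y)=-c<0$ yields, via the explicit choice $t=1/(1+c)$, a point of the open segment that is Birkhoff--James orthogonal to $x$. Your deduction of (a) from (b) through $\rho_-(-x,y)=-\rho_+(x,y)$ is the correct formal step. What the paper's approach buys is the geometric picture of $y$ sitting on the boundary of the orthogonality cone, which fits the surrounding development; what yours buys is a shorter, self-contained computation with no case split on smoothness, no normal cones, and no supporting functionals.
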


\begin{proof}
Let us first prove the necessary part of $(a)$. Since
$\rho_+(x,y)=0$, by \cite[Theorem 3.2]{CW}, it follows that
$x\perp_B y$. If $x$ is a smooth point then $x^{\perp}\cap
span\{x,y\}=\{\alpha y:\alpha \in \mathbb{R}\}$. This situation is
illustrated in figure 1 below. In this case, $(-x)\perp_{B^*}y$ is
trivially true. Now, assume that $x$ is a non-smooth point. If
$H=x^{\perp}\cap span\{x,y\}$ then by taking $\epsilon=0$ in
\cite[Theorem 2.1]{SPM1}, it follows that $H=K\cup (-K)$, where $K$
is a normal cone in $span\{x,y\}$. Let $K$ be generated by $v_1$ and
$v_2$, $i.e.$, $K=\{\alpha v_1+\beta v_2: \alpha\geq0, \beta\geq
0\}$. This situation is illustrated in figure 2 below. We now prove
that if $y\in K$ then $y=v_2$ and similar arguments will show that
if $y\in-K$ then $y=-v_1$. Let $y\in K$ and $y\not=v_2$. Then
$\mathcal{A}=\{z\in S_X\cap K:
z=\frac{tv_2+(1-t)y}{\|tv_2+(1-t)y\|}, t\in(0,1)\}\not=\emptyset$.
Let $z\in \mathcal{A}$. Then there exists $f\in J(x)$ such that
$f(z)=0$. Clearly, the choice of $z$ implies that $f(y)>0$. Thus,
using Lemma~\ref{normderivative}, we arrive at a contradiction.
Hence if $y\in K$ then $y=v_2$ and this proves that
$(-x)\perp_{B^*}y$.

\begin{center}
\begin{tikzpicture}[domain=0:2, scale = 1.40]
\draw[ line width = 0.30mm]   plot[domain=-1:1] (\x,
{0.5-0.5*\x*\x});

\draw[line width = 0.30mm] plot[smooth,domain=-1:1] (\x,
{-0.5+0.5*\x*\x});

\draw[thick] (-1,0)--(1,0) node[right]{};

%\draw[thick] (0,-1.5)--(0,1.5) node[above]{};

\draw[thick] plot[smooth,domain=-1:1] (\x, {-\x})node[right]{};

\draw[thick]   plot[smooth,domain=1:-1] ({\x}, {\x})node[right]{};

\draw[thick, dashed]   plot[smooth,domain=1:-1] ({1-\x},
{\x})node[right]{};

\draw[thick, dashed] plot[smooth,domain=-1:1] ({1-\x},
{-\x})node[right]{};

\draw (0.75,0.4)  node {$v_1$};

\draw (-0.75,0.4)  node {$v_2$};

\draw (-0.85,-0.4)  node {$-v_1$};

\draw (0.8,-0.5)  node {$-v_2$};

\draw (1.2,0.0)  node {$x$};

\draw (-1.3,0.0)  node {$-x$};

\draw [fill] (0.41,0.41) circle [radius=.05];

 \draw [fill] (0.41,-0.41) circle [radius=.05];

 \draw [fill] (-0.41,-0.41) circle [radius=.05];

 \draw [fill] (-0.41,0.41) circle [radius=.05];

\draw [fill] (1,0.0) circle [radius=.05];

\draw [fill] (-1,0) circle [radius=.05];

\draw [thick]    (-5,0) circle [radius=1];

\draw[thick] (-6,0)--(-4,0) node[right]{};

\draw[thick] (-5,-1.5)--(-5,1.5) node[above]{};

\draw (-3.8,0.0)  node {$x$};

\draw (-6.3,0.0)  node {$-x$};

\draw (-5.2,1.2)  node {$y$};

\draw (-5.4,-1.2)  node {$-y$};

\draw [fill] (-4,0) circle [radius=.05];

\draw [fill] (-6,0) circle [radius=.05];

\draw [fill] (-5,1) circle [radius=.05];

\draw [fill] (-5,-1) circle [radius=.05];

\draw[thick, dashed] (-4,1.5)--(-4,-1.5) node[right]{};

\draw (-5,-2) node {$Figure~ 1$};

\draw (0,-2)  node {$Figure~ 2$};

\end{tikzpicture}

\end{center}

Let us now prove the sufficient part of $(a)$. Let
$(-x)\perp_{B^*}y$. Observe that by continuity of $\rho_+(x,.)$ in
second variable, it follows that $\rho_+(x,z)=0$ for some $z\in
span\{x,y\}$. Now, by \cite[Theorem 3.2]{CW}, it follows that $z\in
x^{\perp}\cap span\{x,y\}$ and by the homogeneity of Birkhoff-James
orthogonality we may and do assume without loss of generality that
$\|z\|=1$. If $x$ is a smooth point then it follows trivially that
$\{z\in x^{\perp}\cap span\{x,y\}:\|z\|=1\}=\{\pm y\}$. Thus
$\rho_+(x,y)=0$ in this case. Now, we assume that $x$ is a
non-smooth point. It follows from the homogeneity of Birkhoff-James
orthogonality that $x\perp_B y$. If $H=x^{\perp}\cap span\{x,y\}$
then by taking $\epsilon=0$ in \cite[Theorem 2.1]{SPM1}, it follows
that $H=K\cup (-K)$, where $K$ is a normal cone in $span\{x,y\}$.
Let $K$ be generated by $v_1$ and $v_2$, $i.e.$, $K=\{\alpha
v_1+\beta v_2: \alpha\geq0, \beta\geq 0\}$. This situation is
illustrated in figure 2 above. Clearly, $y=v_2$ if $y\in K$ and
$y=-v_1$ if $y\in-K$. We now claim that $z=y$. Let $z\in K$ and
suppose on the contrary that $z\not=y$, $i.e.$, $z\not=v_2$. Then
$z=\frac{tv_2+(1-t)v_1}{\|tv_2+(1-t)v_1\|}$, $t\in[0,1)$. If $f\in
J(x)$ is such that $f(v_2)=0$ then clearly $f(z)>0$. Now, by using
Lemma~\ref{normderivative}, we arrive at a contradiction. Hence
$z=y$ and $\rho_+(x,y)=0$. Similarly, we can prove that $z=y$ when
$z\in-K$.

Proof of $(b)$ follows from similar arguments as above.

\end{proof}

\section{Bounded bilinear operators}

Throughout this section, we will consider normed linear spaces with
dimension strictly greater than one. Our first result shows that if
$\mathcal{T}\in\mathscr{K}(X\times Y,Z)$ is weak-weak continuous
then the norm attainment set of $\mathcal{T}$ is non-empty, where
$X,Y$ are reflexive Banach spaces and $Z$ is a normed linear space.

\begin{lemma}\label{compact w-w continuous}
Let $X,Y$ be reflexive Banach spaces and let $Z$ be a normed linear
space. If $\mathcal{T}\in\mathscr{K}(X\times Y,Z)$ is  weak-weak
continuous then $\mathcal{T}$ attains its norm.
\end{lemma}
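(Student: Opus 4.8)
The plan is to run a weak‑compactness argument along a norming sequence, using reflexivity of $X$ and $Y$ to produce weak limits, then use the compactness of $\mathcal{T}$ to upgrade weak convergence of the images to norm convergence, and finally weak‑weak continuity to identify the limit. If $\|\mathcal{T}\|=0$ the claim is immediate, since then every pair in the non‑empty set $S_X\times S_Y$ lies in $M_{\mathcal{T}}$; so I would assume $\|\mathcal{T}\|>0$. Pick a norming sequence $\{(x_n,y_n)\}\subset S_X\times S_Y$, that is, $\|\mathcal{T}(x_n,y_n)\|\to\|\mathcal{T}\|$. Since $X$ and $Y$ are reflexive, $B_X$ and $B_Y$ are weakly sequentially compact, so after passing to a subsequence I may assume $x_n\rightharpoonup x_0\in B_X$ and $y_n\rightharpoonup y_0\in B_Y$; as the dual of $X\times Y$ is $X^*\times Y^*$, this is exactly the statement that $(x_n,y_n)\rightharpoonup(x_0,y_0)$ weakly in $X\times Y$. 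Weak‑weak continuity of $\mathcal{T}$ then gives $\mathcal{T}(x_n,y_n)\rightharpoonup\mathcal{T}(x_0,y_0)$ in $Z$.

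The decisive step is to invoke compactness of $\mathcal{T}$: the sequence $\{(x_n,y_n)\}$ is bounded, so $\{\mathcal{T}(x_n,y_n)\}$ has a norm‑convergent subsequence, say $\mathcal{T}(x_{n_k},y_{n_k})\to w$ in $Z$. Norm convergence implies weak convergence, so by uniqueness of weak limits $w=\mathcal{T}(x_0,y_0)$, and at the same time $\|\mathcal{T}(x_0,y_0)\|=\|w\|=\lim_k\|\mathcal{T}(x_{n_k},y_{n_k})\|=\|\mathcal{T}\|$. It then remains only to check that $(x_0,y_0)\in S_X\times S_Y$. Since $\|\mathcal{T}\|>0$ we have $x_0\ne0$ and $y_0\ne0$. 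If $\|x_0\|<1$, set $\tilde x_0=x_0/\|x_0\|\in S_X$; bilinearity gives $\|\mathcal{T}(\tilde x_0,y_0)\|=\|x_0\|^{-1}\|\mathcal{T}(x_0,y_0)\|>\|\mathcal{T}\|$, while $\|\tilde x_0\|=1$ and $\|y_0\|\le1$ force $\|\mathcal{T}(\tilde x_0,y_0)\|\le\|\mathcal{T}\|$, a contradiction. Hence $\|x_0\|=1$, and symmetrically $\|y_0\|=1$, so $(x_0,y_0)\in M_{\mathcal{T}}$.

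The one genuinely delicate point is ensuring that the limit $w$ of the convergent subsequence of images is $\mathcal{T}$ evaluated at a legitimate (unit) pair and not some vector unrelated to the range of $\mathcal{T}$: this is precisely where weak‑weak continuity is indispensable, as it produces $\mathcal{T}(x_0,y_0)$ as a weak limit which, combined with the norm convergence coming from compactness, must coincide with $w$. Note that neither hypothesis alone suffices — compactness by itself does not identify $w$, and weak‑weak continuity by itself only yields $\|\mathcal{T}(x_0,y_0)\|\le\|\mathcal{T}\|$ by weak lower semicontinuity of the norm, which is the wrong inequality. Everything else — the coordinatewise description of weak convergence in $X\times Y$, the estimate $\|\mathcal{T}(x,y)\|\le\|\mathcal{T}\|\,\|x\|\,\|y\|$, and the normalization argument at the end — is routine and I would not dwell on it.
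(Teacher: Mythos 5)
Your proposal is correct and follows essentially the same argument as the paper: extract a weakly convergent subsequence of a norming sequence via reflexivity, use weak--weak continuity and compactness to identify the norm limit of the images as $\mathcal{T}(x_0,y_0)$, and then normalize by bilinearity to show $(x_0,y_0)\in S_X\times S_Y$. The only cosmetic difference is that you rescale one coordinate at a time rather than both simultaneously, which changes nothing of substance.
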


\begin{proof}
Without loss of generality we can assume that $\mathcal{T}\not=0$.
Let $\{(x_n,y_n)\}\subseteq S_{X} \times S_{Y}$ be a norming
sequence for $\mathcal{T}$, $i.e.$,
$\|\mathcal{T}(x_n,y_n)\|\longrightarrow \|\mathcal{T}\|$. Now,
reflexivity of $X$ and $Y$ implies that $X\times Y$ is also
reflexive. Thus there exists a weakly convergent subsequence of
$\{(x_n,y_n)\}$, which we again denote by $\{(x_n,y_n)\}$. Let
$(x_0,y_0)\in B_{X \times Y}$ be the weak limit of $\{(x_n,y_n)\}$.
By our assumption, $\mathcal{T}$ is weak-weak continuous, hence
$\{\mathcal{T}(x_n,y_n)\}$ weakly converges to
$\mathcal{T}(x_0,y_0)$.

Now, compactness of $\mathcal{T}$ implies that there exists a convergent
subsequence of $\{\mathcal{T}(x_n,y_n)\}$. Without loss of
generality we again denote this convergent subsequence by
$\{\mathcal{T}(x_n,y_n)\}$. Let $z\in Z$ such that
$\mathcal{T}(x_n,y_n)\longrightarrow z$.

Since $\mathcal{T}(x_0,y_0)$ is weak limit of
$\{\mathcal{T}(x_n,y_n)\}$, we get $z=\mathcal{T}(x_0,y_0)$ and
$\|\mathcal{T}\|=\|\mathcal{T}(x_0,y_0)\|$. $(x_0,y_0)\in B_{X\times
Y}$ implies that $\|x_0\|\leq 1$ and $\|y_0\|\leq 1$. We now claim
that $\|x_0\|=\|y_0\|=1$. If possible assume that $\|x_0\|<1$. Since
$\mathcal{T}$ is not the zero operator, it is easy to observe that
$\|x_0\|>0$ and $\|y_0\|>0$. Now,
$(\frac{x_0}{\|x_0\|},\frac{y_0}{\|y_0\|})\in S_X\times S_Y$ and it
follows from the bilinearity of $\mathcal{T}$ that
$\|\mathcal{T}(\frac{x_0}{\|x_0\|},\frac{y_0}{\|y_0\|})\|=\frac{1}{\|x_0\|\|y_0\|}{\|\mathcal{T}(x_0,y_0)\|}>\|\mathcal{T}\|$,
contradicting the definition of $\|\mathcal{T}\|$. Thus $\|x_0\|=1$
and similar arguments will show that $\|y_0\|=1$. Hence
$(x_0,y_0)\in M_{\mathcal{T}}$ and this completes the proof of the
lemma.
\end{proof}

If $X,Y,Z$ are finite-dimensional Banach spaces then for any
operator $\mathcal{T}\in\mathscr{B}(X\times Y,Z)$, it follows
trivially that $M_{\mathcal{T}}\not=\emptyset$. For
finite-dimensional Banach spaces $X,Y,Z$ and operators $\mathcal{T},
\mathcal{A}\in\mathscr{B}(X\times Y,Z)$, a complete characterization
of $\mathcal{T}\perp_B\mathcal{A},$ involving the elements of
$M_{\mathcal{T}},$ was obtained in \cite[Theorem 2.4]{DS}. This
characterization is an extension of the complete characterization of
Birkhoff-James orthogonality of compact linear operators obtained in
\cite[Theorem 2.1]{SPM} to the bilinear setting, in the
finite-dimensional case. If $X$ is a reflexive Banach space and $Y$
is a normed linear space then for any compact linear operator $T$
from $X$ to $Y,$ the norm attainment set of $T$ is non-empty.
\cite[Theorem 2.1]{SPM} provides a complete characterization of
${T}\perp_B{A}$ involving norm attaining elements of $T$, where $T$,
$A$ are compact linear operators from a reflexive Banach space $X$
to a normed linear space $Y$.

Lemma~\ref{compact w-w continuous} implies that if
$\mathcal{T}\in\mathscr{K}(X\times Y,Z)$ is  weak-weak continuous
then $M_{\mathcal{T}}\not=\emptyset$, where $X,Y$ are reflexive
Banach spaces and $Z$ is a normed linear space. Using this fact, our
next result provides a complete characterization of
$\mathcal{T}\perp_B\mathcal{A},$ where $X,Y$ are reflexive Banach
spaces, $Z$ is a normed linear space and
$\mathcal{T},\mathcal{A}\in\mathscr{K}(X\times Y,Z)$ are weak-weak
continuous.
 We note that the following result extends \cite[Theorem 2.4]{DS} and
\cite[Theorem 2.1]{SPM}.

\begin{theorem}\label{compact operator orthogonality}
Let $X,Y$ be reflexive Banach spaces and let $Z$ be a normed linear
space. Let $\mathcal{T}\in\mathscr{K}(X\times Y,Z)$ be weak-weak
continuous. Then for any weak-weak continuous bilinear operator
$\mathcal{A}\in\mathscr{K}(X\times Y,Z)$, $\mathcal{T}\perp_B
\mathcal{A}$ if and only if there exists $(x,y),(u,v)\in
M_{\mathcal{T}}$ such that $\mathcal{A}(x,y)\in
(\mathcal{T}(x,y))^+$ and $\mathcal{A}(u,v)\in
(\mathcal{T}(u,v))^-$.
\end{theorem}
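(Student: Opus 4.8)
The plan is to mimic the structure of the known operator result \cite[Theorem 2.1]{SPM}, but adapted to the bilinear setting, using Lemma~\ref{compact w-w continuous} to guarantee that the relevant norm attainment sets are non-empty. First I would prove the sufficiency: suppose there exist $(x,y),(u,v)\in M_{\mathcal{T}}$ with $\mathcal{A}(x,y)\in(\mathcal{T}(x,y))^+$ and $\mathcal{A}(u,v)\in(\mathcal{T}(u,v))^-$. For any $\lambda\geq 0$ we have $\|\mathcal{T}+\lambda\mathcal{A}\|\geq\|(\mathcal{T}+\lambda\mathcal{A})(x,y)\|=\|\mathcal{T}(x,y)+\lambda\mathcal{A}(x,y)\|\geq\|\mathcal{T}(x,y)\|=\|\mathcal{T}\|$, where the second inequality uses $\mathcal{A}(x,y)\in(\mathcal{T}(x,y))^+$; symmetrically, using $(u,v)$ and the negative part, $\|\mathcal{T}+\lambda\mathcal{A}\|\geq\|\mathcal{T}\|$ for all $\lambda\leq 0$. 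Hence $\|\mathcal{T}+\lambda\mathcal{A}\|\geq\|\mathcal{T}\|$ for all $\lambda\in\mathbb{R}$, i.e. $\mathcal{T}\perp_B\mathcal{A}$.

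For the necessity, assume $\mathcal{T}\perp_B\mathcal{A}$; without loss of generality $\mathcal{T}\neq 0$. The standard approach is: for each $n$, consider $\mathcal{T}-\tfrac{1}{n}\mathcal{A}$ and $\mathcal{T}+\tfrac{1}{n}\mathcal{A}$. By Lemma~\ref{compact w-w continuous} (noting that a real-linear combination of weak-weak continuous compact bilinear operators is again weak-weak continuous and compact), these operators attain their norms; pick $(x_n,y_n)\in M_{\mathcal{T}-\frac{1}{n}\mathcal{A}}$ and $(u_n,v_n)\in M_{\mathcal{T}+\frac{1}{n}\mathcal{A}}$. By reflexivity of $X\times Y$, pass to subsequences so that $(x_n,y_n)\rightharpoonup(x,y)$ and $(u_n,v_n)\rightharpoonup(u,v)$ weakly in $B_{X\times Y}$; weak-weak continuity of $\mathcal{T}$ and $\mathcal{A}$ gives $\mathcal{T}(x_n,y_n)\rightharpoonup\mathcal{T}(x,y)$, $\mathcal{A}(x_n,y_n)\rightharpoonup\mathcal{A}(x,y)$, and similarly for $(u_n,v_n)$; compactness lets us assume these image sequences actually converge in norm, so the weak limits are the norm limits. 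One then checks that $(x,y),(u,v)\in M_{\mathcal{T}}$ exactly as in the proof of Lemma~\ref{compact w-w continuous}: from $\|\mathcal{T}\|\leq\|(\mathcal{T}-\tfrac{1}{n}\mathcal{A})(x_n,y_n)\|\leq\|\mathcal{T}(x_n,y_n)\|+\tfrac{1}{n}\|\mathcal{A}\|$ (the first inequality because $\mathcal{T}\perp_B\mathcal{A}$ implies $\|\mathcal{T}-\tfrac1n\mathcal{A}\|\geq\|\mathcal{T}\|$, and $(x_n,y_n)$ is norm-attaining for $\mathcal{T}-\tfrac1n\mathcal{A}$) we get $\|\mathcal{T}(x_n,y_n)\|\to\|\mathcal{T}\|$, hence $\|\mathcal{T}(x,y)\|=\|\mathcal{T}\|$, and the normalization argument of Lemma~\ref{compact w-w continuous} forces $\|x\|=\|y\|=1$; likewise for $(u,v)$.

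It then remains to show $\mathcal{A}(x,y)\in(\mathcal{T}(x,y))^+$ and $\mathcal{A}(u,v)\in(\mathcal{T}(u,v))^-$. For the first, fix $\lambda\geq 0$; we want $\|\mathcal{T}(x,y)+\lambda\mathcal{A}(x,y)\|\geq\|\mathcal{T}(x,y)\|=\|\mathcal{T}\|$. For large $n$ with $\tfrac{1}{n}\leq\lambda$ is not quite the right device; instead I would argue directly using the sequence $(u_n,v_n)$ attaining the norm of $\mathcal{T}+\tfrac1n\mathcal{A}$: from $\|\mathcal{T}+\tfrac1n\mathcal{A}\|\geq\|\mathcal{T}\|$ we obtain $\|\mathcal{T}(u_n,v_n)+\tfrac1n\mathcal{A}(u_n,v_n)\|\geq\|\mathcal{T}\|$, and passing to the limit $\|\mathcal{T}(u,v)\|=\|\mathcal{T}\|$; but to get the $x^+$ condition we need a scaling trick. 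The cleanest route, following \cite[Theorem 2.1]{SPM}, is: having fixed $\lambda>0$, choose $n$ with $\tfrac1n<\lambda$ and apply the convexity of $\|\cdot\|$ along the segment, or alternatively take, for each rational $\lambda=p/q>0$, the operator $\mathcal{T}+\tfrac{p}{q}\mathcal{A}$ directly and run the same compactness argument to produce points of $M_{\mathcal{T}}$ on which $\mathcal{A}$ lies in the positive part — then a diagonal/continuity argument over a countable dense set of $\lambda$'s, together with the fact that $M_{\mathcal{T}}$ is closed (by compactness and weak-weak continuity), pins down a single pair $(x,y)$ working for all $\lambda\geq 0$. The main obstacle is precisely this last step: in the linear operator proof the positive-part membership is extracted cleanly, and here one must be careful that weak convergence of $(u_n,v_n)$ is upgraded to the norm convergence of the images (which compactness supplies) and that the limiting pair genuinely lies in $S_X\times S_Y$ and not merely in $B_{X\times Y}$; once those two points are secured, the inequality $\|\mathcal{T}(x,y)+\lambda\mathcal{A}(x,y)\|\geq\|\mathcal{T}\|$ for all $\lambda\geq 0$ follows by passing to the limit in $\|\mathcal{T}(u_n,v_n)+\tfrac1n\mathcal{A}(u_n,v_n)\|\geq\|\mathcal{T}\|$ after rescaling $\lambda$ appropriately, and symmetrically for the negative part with $(x_n,y_n)$.
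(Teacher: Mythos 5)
Your sufficiency argument and the overall skeleton of the necessity argument (perturb $\mathcal{T}$ by $\pm\frac{1}{n}\mathcal{A}$, invoke Lemma~\ref{compact w-w continuous} for the perturbed operators, extract a weak limit of the norm-attaining pairs, use compactness to upgrade convergence of the images, and rerun the normalization from the lemma to see that the limit pair lies in $S_X\times S_Y$ and in $M_{\mathcal{T}}$) coincide with the paper's proof, which likewise defers the final step to the argument of \cite[Theorem 2.1]{SPM}. The problem is that you explicitly identify that final step --- showing $\mathcal{A}(x,y)\in(\mathcal{T}(x,y))^+$ --- as ``the main obstacle'' and then do not close it. Worse, you dismiss the device that actually works (``choose $n$ with $\tfrac1n\leq\lambda$'') and replace it with an unexecuted plan involving rational $\lambda=p/q$, a separate compactness argument for each, and a ``diagonal/continuity argument'' to pin down a single pair; as described, that route produces a possibly different pair of $M_{\mathcal{T}}$ for each rational $\lambda$, and the step that merges them into one pair valid for all $\lambda\geq 0$ is not justified (norm-closedness of $M_{\mathcal{T}}$ gives no compactness in infinite dimensions, so you would have to take weak limits again, which just collapses back to the original argument). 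Your closing sentence, that the inequality follows ``by passing to the limit in $\|\mathcal{T}(u_n,v_n)+\tfrac1n\mathcal{A}(u_n,v_n)\|\geq\|\mathcal{T}\|$ after rescaling $\lambda$ appropriately,'' is not a proof: the limit of that inequality is only $\|\mathcal{T}(u,v)\|\geq\|\mathcal{T}\|$, since $\tfrac1n\mathcal{A}(u_n,v_n)\to 0$.

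The missing ingredient is convexity of the norm along the line $t\mapsto\|\mathcal{T}(u_n,v_n)+t\mathcal{A}(u_n,v_n)\|=:f_n(t)$. One has $f_n(0)=\|\mathcal{T}(u_n,v_n)\|\leq\|\mathcal{T}\|$ and, because $\mathcal{T}\perp_B\mathcal{A}$ and $(u_n,v_n)$ attains the norm of $\mathcal{T}+\tfrac1n\mathcal{A}$,
\begin{equation*}
f_n\bigl(\tfrac1n\bigr)=\bigl\|\mathcal{T}+\tfrac1n\mathcal{A}\bigr\|\geq\|\mathcal{T}\|\geq f_n(0),
\end{equation*}
so the convex function $f_n$ is nondecreasing on $[\tfrac1n,\infty)$ and hence $f_n(\lambda)\geq\|\mathcal{T}\|$ for every $\lambda\geq\tfrac1n$. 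Now fix $\lambda>0$: for all $n>1/\lambda$ this gives $\|\mathcal{T}(u_n,v_n)+\lambda\mathcal{A}(u_n,v_n)\|\geq\|\mathcal{T}\|$, and letting $n\to\infty$ (using the norm convergence of $\mathcal{T}(u_n,v_n)$ and $\mathcal{A}(u_n,v_n)$ supplied by compactness) yields $\|\mathcal{T}(u,v)+\lambda\mathcal{A}(u,v)\|\geq\|\mathcal{T}\|=\|\mathcal{T}(u,v)\|$, i.e.\ exactly the positive-part membership; the negative part is symmetric with $\mathcal{T}-\tfrac1n\mathcal{A}$. With this paragraph inserted, your proof is complete and is essentially the paper's.
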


\begin{proof}
For the sufficient part, observe that if $\mathcal{A}(x,y)\in
(\mathcal{T}(x,y))^+$ for some $(x,y)\in M_{\mathcal{T}}$ then
$\|\mathcal{T}+\lambda \mathcal{A}\|\geq \|\mathcal{T}(x,y)+\lambda
\mathcal{A}(x,y)\|\geq \|\mathcal{T}(x,y)\|=\|\mathcal{T}\|$ for all
$\lambda\geq 0$. Similarly if $\mathcal{A}(u,v)\in
(\mathcal{T}(u,v))^-$ for some $(u,v)\in M_{\mathcal{T}}$ then
$\|\mathcal{T}+\lambda \mathcal{A}\|\geq \|\mathcal{T}\|$ for all
$\lambda\leq 0$.

For the necessary part, observe that for each $n\in\mathbb{N}$,
$\mathcal{T}+\frac{1}{n}\mathcal{A}\in\mathscr{K}(X\times Y,Z)$ and
$\mathcal{T}+\frac{1}{n}\mathcal{A}$ is weak-weak continuous. Thus
from Lemma~\ref{compact w-w continuous}, it follows that
$(x_n,y_n)\in M_{\mathcal{T}+\frac{1}{n}\mathcal{A}}$ for some
$(x_n,y_n)\in S_{X}\times S_{Y}$. Since $X\times Y$ is a reflexive
Banach space, $B_{X\times Y}$ is weakly compact. Without loss of
generality we assume that $(x_n,y_n)$ converges weakly to $(x,y)$.
By using arguments, as given in the proof of Lemma~\ref{compact w-w
continuous}, we can show that $(x,y)\in M_{\mathcal{T}}$. Also, by
using the arguments in the proof of \cite[Theorem 2.1]{SPM}, it
follows that  $\mathcal{A}(x,y)\in(\mathcal{T}(x,y))^+$. Similarly,
considering the compact bilinear and weak-weak continuous operators
$\mathcal{T}-\frac{1}{n}\mathcal{A}$, $n\in\mathbb{N}$, it follows
that there exists some $(u,v)\in M_{\mathcal{T}}$ and
$\mathcal{A}(u,v)\in(\mathcal{T}(u,v))^-$.
\end{proof}

If in addition to the assumptions of the above theorem, we assume
that $M_{\mathcal{T}}=\{(\pm x_0,\pm y_0)\}$  for some $(x_0,y_0)\in
S_{X}\times S_{Y}$ then we obtain the following immediate corollary.

\begin{cor}\label{w-w continuous}
Let $X,Y$ be reflexive Banach spaces and let $Z$ be a normed linear
space. Suppose $\mathcal{T}, \mathcal{A}\in\mathscr{K}(X\times Y,Z)$
are such that $\mathcal{T}, \mathcal{A}$ are weak-weak continuous
and $M_{\mathcal{T}}=\{(\pm x_0, \pm y_0)\}$ for some $(x_0,y_0)\in
S_{X}\times S_{Y}$. Then $\mathcal{T}\perp_B \mathcal{A}$ if and
only if $\mathcal{T}(x_0,y_0)\perp_B \mathcal{A}(x_0,y_0)$.
\end{cor}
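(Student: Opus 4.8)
The plan is to derive this directly from Theorem~\ref{compact operator orthogonality}. First I would prove the easy direction: if $\mathcal{T}(x_0,y_0)\perp_B \mathcal{A}(x_0,y_0)$, then $\mathcal{A}(x_0,y_0)\in (\mathcal{T}(x_0,y_0))^+ \cap (\mathcal{T}(x_0,y_0))^-$ by the identity $z^\perp = z^+ \cap z^-$ recorded in the introduction. Since $(x_0,y_0)\in M_{\mathcal{T}}$, Theorem~\ref{compact operator orthogonality} (using $(x,y)=(u,v)=(x_0,y_0)$) immediately gives $\mathcal{T}\perp_B \mathcal{A}$.

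For the converse, suppose $\mathcal{T}\perp_B \mathcal{A}$. By Theorem~\ref{compact operator orthogonality} there exist $(x,y),(u,v)\in M_{\mathcal{T}}$ with $\mathcal{A}(x,y)\in (\mathcal{T}(x,y))^+$ and $\mathcal{A}(u,v)\in (\mathcal{T}(u,v))^-$. The hypothesis $M_{\mathcal{T}}=\{(\pm x_0,\pm y_0)\}$ forces $(x,y)$ and $(u,v)$ each to be one of the four points $(\pm x_0,\pm y_0)$. Here I would invoke the bilinearity of $\mathcal{T}$ and $\mathcal{A}$: for any choice of signs, $\mathcal{T}(\pm x_0,\pm y_0)=\pm\mathcal{T}(x_0,y_0)$ and $\mathcal{A}(\pm x_0,\pm y_0)=\pm\mathcal{A}(x_0,y_0)$ with the \emph{same} sign pattern. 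Then I would use the homogeneity properties of the positive and negative parts, namely that $\beta w \in (\alpha z)^+$ for $\alpha,\beta > 0$ iff $w\in z^+$, while multiplying by a negative scalar on either side swaps $z^+$ and $z^-$. Writing $a=\mathcal{T}(x_0,y_0)$ and $b=\mathcal{A}(x_0,y_0)$, the relation $\mathcal{A}(x,y)\in (\mathcal{T}(x,y))^+$ becomes either $b\in a^+$ or $b\in a^-$ depending on the sign, and similarly $\mathcal{A}(u,v)\in (\mathcal{T}(u,v))^-$ becomes $b\in a^-$ or $b\in a^+$. In every case one obtains $b\in a^+$ from one relation and $b\in a^-$ from the other (possibly the same relation gives both after sign analysis), so $b\in a^+\cap a^- = a^\perp$, i.e.\ $\mathcal{T}(x_0,y_0)\perp_B \mathcal{A}(x_0,y_0)$.

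The only mildly delicate point is the bookkeeping over the sign cases in the converse direction, but this is routine once one notes that $z^+$ and $z^-$ are interchanged under negation of either argument and invariant under multiplication by positive scalars; since $a^+ \cup a^- = X$ is false in general but $a^+\cap a^-=a^\perp$ is exactly what we want, having \emph{both} memberships available (one with a $+$ and one with a $-$ among the possible sign outcomes) is enough. I do not expect any real obstacle here: the substance is entirely contained in Theorem~\ref{compact operator orthogonality}, and the corollary is a clean specialization using that the norm attainment set is a single orbit under sign changes.

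\begin{proof}
Suppose first that $\mathcal{T}(x_0,y_0)\perp_B \mathcal{A}(x_0,y_0)$. Since $z^\perp = z^+\cap z^-$ for every $z\in Z$, we have $\mathcal{A}(x_0,y_0)\in (\mathcal{T}(x_0,y_0))^+$ and $\mathcal{A}(x_0,y_0)\in (\mathcal{T}(x_0,y_0))^-$. As $(x_0,y_0)\in M_{\mathcal{T}}$, applying Theorem~\ref{compact operator orthogonality} with $(x,y)=(u,v)=(x_0,y_0)$ yields $\mathcal{T}\perp_B \mathcal{A}$.

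Conversely, assume $\mathcal{T}\perp_B \mathcal{A}$. By Theorem~\ref{compact operator orthogonality} there exist $(x,y),(u,v)\in M_{\mathcal{T}}$ such that $\mathcal{A}(x,y)\in (\mathcal{T}(x,y))^+$ and $\mathcal{A}(u,v)\in (\mathcal{T}(u,v))^-$. Since $M_{\mathcal{T}}=\{(\pm x_0,\pm y_0)\}$, both $(x,y)$ and $(u,v)$ are of the form $(\varepsilon_1 x_0,\varepsilon_2 y_0)$ for signs $\varepsilon_1,\varepsilon_2\in\{-1,1\}$. Write $a=\mathcal{T}(x_0,y_0)$ and $b=\mathcal{A}(x_0,y_0)$. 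By bilinearity, $\mathcal{T}(\varepsilon_1 x_0,\varepsilon_2 y_0)=\varepsilon_1\varepsilon_2\, a$ and $\mathcal{A}(\varepsilon_1 x_0,\varepsilon_2 y_0)=\varepsilon_1\varepsilon_2\, b$. Recall that for $w,z\in Z$ and $\alpha>0$ we have $w\in (\alpha z)^+\Leftrightarrow w\in z^+$ and $w\in (\alpha z)^-\Leftrightarrow w\in z^-$, while $w\in (-z)^+\Leftrightarrow w\in z^-$, and $(-w)\in z^+\Leftrightarrow w\in z^-$; hence $(\delta w)\in (\delta z)^+\Leftrightarrow w\in z^+$ for any sign $\delta$. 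Applying this with $\delta=\varepsilon_1\varepsilon_2$ to the relation $\mathcal{A}(x,y)\in (\mathcal{T}(x,y))^+$ gives $b\in a^+$, and applying it to $\mathcal{A}(u,v)\in (\mathcal{T}(u,v))^-$ gives $b\in a^-$. Therefore $b\in a^+\cap a^- = a^\perp$, that is, $\mathcal{T}(x_0,y_0)\perp_B \mathcal{A}(x_0,y_0)$.
\end{proof}
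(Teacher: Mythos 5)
Your proof is correct and is exactly the specialization of Theorem~\ref{compact operator orthogonality} that the paper intends (it states the corollary as ``immediate'' and gives no proof), with the sign bookkeeping via $(\delta w)\in(\delta z)^{\pm}\Leftrightarrow w\in z^{\pm}$ and the observation that $\mathcal{T},\mathcal{A}$ pick up the \emph{same} factor $\varepsilon_1\varepsilon_2$ being the right way to make it precise. One trivial aside in your informal plan is false --- $a^{+}\cup a^{-}$ is in fact all of $Z$ for every $a$, by convexity of $\lambda\mapsto\|a+\lambda b\|$ --- but this plays no role in your actual argument.
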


A complete characterization of Birkhoff-James orthogonality of
bounded linear operators between normed linear spaces (without
assuming any restriction on the norm attainment set of the operator)
was obtained in \cite[Theorem 2.4]{SPM}. The following result shows
that a similar result is also true for bounded bilinear operators.
We would like to mention that in the bilinear case, the proof
follows from similar arguments as given in \cite[Theorem 2.4]{SPM}.

\begin{theorem}\label{linear operator orthogonality}
Let $X,Y,Z$ be normed linear spaces and let $\mathcal{T}\in
\mathscr{B}(X\times Y,Z)$. Then for any $\mathcal{A}\in
\mathscr{B}(X\times Y,Z)$, $\mathcal{T}\perp_B \mathcal{A}$ if and
only if either of the conditions in $(a)$ or in $(b)$ hold true:

\begin{itemize}
\item[(a)] There exists a sequence  $\{(x_n,y_n)\}$ in $S_X\times
S_Y$ such that $\|\mathcal{T}(x_n,y_n)\|\longrightarrow
\|\mathcal{T}\|$ and $\|\mathcal{A}(x_n,y_n)\|\longrightarrow 0,$ as
$n\longrightarrow \infty$.

\item[(b)] There exist two sequences $\{(x_n,y_n)\}$, $\{(u_n,v_n)\}$  in $S_X\times
S_Y$ and two sequences of positive real numbers $\{\varepsilon_n\}$,
$\{\delta_n\}$ such that

\item[(i)] $\varepsilon_n\longrightarrow 0$,
$\delta_n\longrightarrow 0$ as $n\longrightarrow \infty$.

\item[(ii)] $\|\mathcal{T}(x_n,y_n)\|\longrightarrow
\|\mathcal{T}\|$ and $\|\mathcal{T}(u_n,v_n)\|\longrightarrow
\|\mathcal{T}\|,$ as $n\longrightarrow \infty$.

\item[(iii)] $\mathcal{A}(x_n,y_n)\in
(\mathcal{T}(x_n,y_n))^{+\varepsilon_n}$ and
$\mathcal{A}(u_n,v_n)\in (\mathcal{T}(u_n,v_n))^{-\delta_n}$ for all
$n\in\mathbb{N}$.
\end{itemize}
\end{theorem}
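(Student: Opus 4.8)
The plan is to transcribe, almost verbatim, the proof of \cite[Theorem 2.4]{SPM} into the bilinear setting. The only structural facts needed are that $\mathscr{B}(X\times Y,Z)$ is a normed linear space, that bilinearity gives $(\mathcal{T}+\lambda\mathcal{A})(x,y)=\mathcal{T}(x,y)+\lambda\mathcal{A}(x,y)$ for every $\lambda\in\mathbb{R}$ and every $(x,y)\in X\times Y$, and that, by the very definition of $\|\mathcal{S}\|$ for $\mathcal{S}\in\mathscr{B}(X\times Y,Z)$, for each $\eta>0$ there is $(x,y)\in S_X\times S_Y$ with $\|\mathcal{S}(x,y)\|>\|\mathcal{S}\|-\eta$. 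Thus $S_X\times S_Y$ plays the role of the unit sphere $S_X$, and bilinearity of $\mathcal{T},\mathcal{A}$ plays the role of linearity, in \cite{SPM}. We may assume $\mathcal{T}\neq 0$.

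For the sufficiency, suppose $(a)$ holds. Then for every $\lambda\in\mathbb{R}$ and every $n$ we have $\|\mathcal{T}+\lambda\mathcal{A}\|\geq\|\mathcal{T}(x_n,y_n)+\lambda\mathcal{A}(x_n,y_n)\|\geq\|\mathcal{T}(x_n,y_n)\|-|\lambda|\,\|\mathcal{A}(x_n,y_n)\|$, and letting $n\to\infty$ yields $\|\mathcal{T}+\lambda\mathcal{A}\|\geq\|\mathcal{T}\|$, i.e. $\mathcal{T}\perp_B\mathcal{A}$. If instead $(b)$ holds, treat $\lambda\geq 0$ and $\lambda\leq 0$ separately: for $\lambda\geq 0$ the membership $\mathcal{A}(x_n,y_n)\in(\mathcal{T}(x_n,y_n))^{+\varepsilon_n}$ gives, via the defining inequality of the approximate positive part, a lower bound for $\|\mathcal{T}(x_n,y_n)+\lambda\mathcal{A}(x_n,y_n)\|$ that differs from $\|\mathcal{T}(x_n,y_n)\|$ by an amount controlled by $\varepsilon_n$, $\lambda$ and $\|\mathcal{A}\|$; since $\|\mathcal{T}(x_n,y_n)\|\to\|\mathcal{T}\|$ and $\varepsilon_n\to 0$, passing to the limit gives $\|\mathcal{T}+\lambda\mathcal{A}\|\geq\|\mathcal{T}\|$. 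The case $\lambda\leq 0$ is symmetric, using $(u_n,v_n)$ and $(\mathcal{T}(u_n,v_n))^{-\delta_n}$. Hence $\mathcal{T}\perp_B\mathcal{A}$.

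For the necessity, assume $\mathcal{T}\perp_B\mathcal{A}$, so $\|\mathcal{T}\pm\tfrac1n\mathcal{A}\|\geq\|\mathcal{T}\|$ for every $n$. Choose $(x_n,y_n),(u_n,v_n)\in S_X\times S_Y$ with $\|(\mathcal{T}+\tfrac1n\mathcal{A})(x_n,y_n)\|>\|\mathcal{T}+\tfrac1n\mathcal{A}\|-\tfrac1{n^2}\geq\|\mathcal{T}\|-\tfrac1{n^2}$ and $\|(\mathcal{T}-\tfrac1n\mathcal{A})(u_n,v_n)\|>\|\mathcal{T}\|-\tfrac1{n^2}$. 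The triangle inequality then forces $\|\mathcal{T}(x_n,y_n)\|\geq\|\mathcal{T}\|-\tfrac1{n^2}-\tfrac{\|\mathcal{A}\|}{n}\to\|\mathcal{T}\|$, and likewise for $(u_n,v_n)$, so both are norming sequences for $\mathcal{T}$. If some subsequence of $\{\|\mathcal{A}(x_n,y_n)\|\}$ or of $\{\|\mathcal{A}(u_n,v_n)\|\}$ tends to $0$, then passing to that subsequence gives exactly condition $(a)$. Otherwise both $\{\|\mathcal{A}(x_n,y_n)\|\}$ and $\{\|\mathcal{A}(u_n,v_n)\|\}$ are bounded away from $0$, and one upgrades the one-parameter inequality $\|\mathcal{T}(x_n,y_n)+\tfrac1n\mathcal{A}(x_n,y_n)\|>\|\mathcal{T}\|-\tfrac1{n^2}$ to a membership $\mathcal{A}(x_n,y_n)\in(\mathcal{T}(x_n,y_n))^{+\varepsilon_n}$ with $\varepsilon_n\to 0$, and symmetrically $\mathcal{A}(u_n,v_n)\in(\mathcal{T}(u_n,v_n))^{-\delta_n}$ with $\delta_n\to 0$; this places us in case $(b)$.

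The only step that is not a formal substitution is this last upgrade, and it is precisely where the case split earns its keep. Writing $g_n(\lambda)=\|\mathcal{T}(x_n,y_n)+\lambda\mathcal{A}(x_n,y_n)\|$, which is convex with $g_n(0)=\|\mathcal{T}(x_n,y_n)\|\leq\|\mathcal{T}\|$, the hypothesis of case $(b)$ that $\|\mathcal{A}(x_n,y_n)\|\geq c_0>0$ forces $g_n(\lambda)\geq c_0\lambda-\|\mathcal{T}\|$, so any minimiser of $g_n$ on $[0,\infty)$ lies in a bounded interval $[0,\Lambda]$ with $\Lambda$ independent of $n$; combining this with the bound at $\lambda=\tfrac1n$ and the monotonicity of the difference quotients of a convex function yields a lower bound $g_n(\lambda)\geq\|\mathcal{T}(x_n,y_n)\|-\varepsilon_n$ valid for all $\lambda\geq 0$ with $\varepsilon_n\to 0$. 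Carrying out this estimate carefully (keeping track of the constants) and verifying that nowhere in the argument of \cite[Theorem 2.4]{SPM} was linearity of the operators used in an essential way — everywhere it was used, bilinearity of $\mathcal{T}$ and $\mathcal{A}$ suffices — is the main content of the proof; the remaining bookkeeping is routine.
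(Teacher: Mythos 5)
Your proposal is correct and follows the paper's own route exactly: the paper gives no proof of this theorem, stating only that it follows from the same arguments as \cite[Theorem 2.4]{SPM} with $S_X\times S_Y$ and bilinearity replacing $S_X$ and linearity, which is precisely the transcription you carry out (including the convexity/coercivity upgrade in the case where $\|\mathcal{A}(x_n,y_n)\|$ and $\|\mathcal{A}(u_n,v_n)\|$ stay bounded away from zero). The one point to double-check is that the $\lambda$-uniform additive bound $g_n(\lambda)\geq\|\mathcal{T}(x_n,y_n)\|-\varepsilon_n$ you derive really matches the membership $\mathcal{A}(x_n,y_n)\in(\mathcal{T}(x_n,y_n))^{+\varepsilon_n}$ as defined in \cite{SPM} (the present paper never defines $x^{+\varepsilon}$); your reading is consistent with the $\epsilon$-approximate version stated at the end of the paper, but if the definition carried a $\lambda$-proportional error the small-$\lambda$ regime would need a separate remark.
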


In \cite[Theorem 3.3]{SPMR}, Sain et al. observed that any non-zero
compact linear operator from a reflexive Banach space $X$ to a
normed linear space $Y$ is smooth if and only if $M_{T}=\{\pm x_0\}$
for some $x_0\in S_X$ and $Tx_0$ is a smooth point in $Y$. In
\cite[Theorem 2.5]{DS}, Sain extended this result to smooth bounded
bilinear operator from $X\times Y$  to $Z$, where $X,Y,Z$ are
finite-dimensional Banach spaces. Our next aim is to continue the
study of smooth compact bilinear operators from $X\times Y$ to $Z$
with non-empty norm attainment set, where $X,Y$ are
infinite-dimensional reflexive Banach spaces and $Z$ is any normed
linear space. Observe that for any weak-weak continuous bilinear
operator $\mathcal{T}\in\mathscr{K}(X\times Y, Z),$ it follows from
Lemma~\ref{compact w-w continuous} that
$M_\mathcal{T}\not=\emptyset$, where $X,Y$ are reflexive Banach
spaces. For this reason we will restrict ourselves to  weak-weak
continuous compact bilinear operators. First we prove that if the
norm attainment set for a smooth bilinear operator $\mathcal{T}\in
\mathscr{B}(X\times Y, Z)$ is non-empty then $M_{\mathcal{T}}$ can
have only four points.

\begin{theorem}\label{smooth norm attainment}
Let $X,Y,Z$ be normed linear spaces. If $\mathcal{T}\in
\mathscr{B}(X\times Y, Z)$ is a smooth point and
$M_{\mathcal{T}}\not=\emptyset$ then there exists $(x_0,y_0)\in
S_X\times S_Y$ such that $M_{\mathcal{T}}=\{(\pm x_0,\pm y_0)\}$.
\end{theorem}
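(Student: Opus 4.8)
The plan is to exploit James's characterisation of smoothness (quoted in the introduction): $\mathcal{T}$ is smooth in $\mathscr{B}(X\times Y,Z)$ if and only if whenever $\mathcal{T}\perp_B \mathcal{A}_1$ and $\mathcal{T}\perp_B \mathcal{A}_2$ we have $\mathcal{T}\perp_B(\mathcal{A}_1+\mathcal{A}_2)$. So I would argue by contradiction: suppose $M_{\mathcal{T}}$ contains two essentially different norm-attaining pairs, say $(x_0,y_0)$ and $(x_1,y_1)$ in $S_X\times S_Y$ with $(x_1,y_1)\neq(\pm x_0,\pm y_0)$. The goal is to manufacture two bounded bilinear operators $\mathcal{A}_1,\mathcal{A}_2$ with $\mathcal{T}\perp_B\mathcal{A}_i$ but $\mathcal{T}\not\perp_B(\mathcal{A}_1+\mathcal{A}_2)$, contradicting smoothness.

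\textbf{Construction of the operators.} First I would record, using Lemma~\ref{compact w-w continuous}-type reasoning together with the sufficiency direction of Theorem~\ref{linear operator orthogonality}, that if $(a,b)\in M_{\mathcal{T}}$ and $\mathcal{A}$ is a bilinear operator with $\mathcal{A}(a,b)=0$ and $\|\mathcal{A}\|$ suitably small relative to $\|\mathcal{T}\|$ on the ``complementary'' directions, then in fact $\mathcal{T}\perp_B\mathcal{A}$ — more precisely, if $\mathcal{A}(a,b)\in(\mathcal{T}(a,b))^+\cap(\mathcal{T}(a,b))^-=(\mathcal{T}(a,b))^\perp$, then $\mathcal{T}\perp_B\mathcal{A}$. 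The cleanest way is to take rank-one-type bilinear operators: choose functionals and vectors so that $\mathcal{A}_1$ is supported essentially at the pair $(x_0,y_0)$ and $\mathcal{A}_2$ essentially at $(x_1,y_1)$, arranged so that $\mathcal{A}_1(x_0,y_0)$ and $\mathcal{A}_2(x_1,y_1)$ are Birkhoff-James orthogonal to $\mathcal{T}(x_0,y_0)$ and $\mathcal{T}(x_1,y_1)$ respectively (possible since $\dim Z>1$, and an orthogonal complement is always nontrivial by James's theorem), while keeping $\mathcal{A}_1$ and $\mathcal{A}_2$ small enough in norm that orthogonality to $\mathcal{T}$ is not spoiled away from their respective support pairs. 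Then $\mathcal{T}\perp_B\mathcal{A}_1$ and $\mathcal{T}\perp_B\mathcal{A}_2$. The point of using two \emph{different} pairs is that the sum $\mathcal{A}_1+\mathcal{A}_2$ has $(\mathcal{A}_1+\mathcal{A}_2)(x_0,y_0)=\mathcal{A}_1(x_0,y_0)$ (if $\mathcal{A}_2$ vanishes at $(x_0,y_0)$) and similarly at $(x_1,y_1)$; by choosing the two orthogonal directions in $Z$ to point ``the wrong way'' relative to each other — one in $(\mathcal{T}(x_0,y_0))^+\setminus(\mathcal{T}(x_0,y_0))^-$ and the other in $(\mathcal{T}(x_1,y_1))^-\setminus(\mathcal{T}(x_1,y_1))^+$ — one forces $\mathcal{T}\not\perp_B(\mathcal{A}_1+\mathcal{A}_2)$ via the necessity direction of Theorem~\ref{linear operator orthogonality}: every norming sequence for $\mathcal{T}$ must concentrate near $M_{\mathcal{T}}$, hence near $(\pm x_0,\pm y_0)$ and $(\pm x_1,\pm y_1)$, and on each such pair $(\mathcal{A}_1+\mathcal{A}_2)$ fails the required one-sided orthogonality.

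\textbf{Reducing to four points.} Once it is shown that $M_{\mathcal{T}}$ cannot contain two pairs that disagree ``up to sign in both slots'', I would separately observe that $M_{\mathcal{T}}$ is symmetric under the sign flips $(x,y)\mapsto(-x,y)$, $(x,y)\mapsto(x,-y)$, $(x,y)\mapsto(-x,-y)$ because $\|\mathcal{T}(\pm x,\pm y)\|=\|\mathcal{T}(x,y)\|$ by bilinearity. Combining these two facts: fix any $(x_0,y_0)\in M_{\mathcal{T}}$ (nonempty by hypothesis); then $\{(\pm x_0,\pm y_0)\}\subseteq M_{\mathcal{T}}$, and any further element would be of the form excluded above, so $M_{\mathcal{T}}=\{(\pm x_0,\pm y_0)\}$. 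A small point to handle carefully is that a priori $M_{\mathcal{T}}$ might contain $(x_0,y_0)$ and $(x_0,y_1)$ with $y_1\neq\pm y_0$ but $x$-slot agreeing — this is the same phenomenon and is killed by the same construction (localise $\mathcal{A}_2$ at the direction $y_1$ in the second slot).

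\textbf{Main obstacle.} The delicate step is the quantitative control: I need $\mathcal{A}_1$ to be orthogonal to $\mathcal{T}$ \emph{globally}, not just at $(x_0,y_0)$, and likewise for $\mathcal{A}_2$, while the sum fails globally. In the general normed-space setting of the statement (no reflexivity, $M_{\mathcal{T}}$ merely assumed nonempty, not compact), I cannot appeal to weak limits; instead I must use Theorem~\ref{linear operator orthogonality} directly, showing that any norming sequence for $\mathcal{T}$ is forced (by a standard argument: if $\|\mathcal{T}(x_n,y_n)\|\to\|\mathcal{T}\|$ and $(x_n,y_n)$ stays bounded away from $M_{\mathcal{T}}$, one derives a contradiction with $\|\mathcal{T}\|$ being the supremum only if $M_{\mathcal{T}}$ were closed — which it is, as a preimage of a closed set under a continuous map on a set where the sup is attained) to cluster at $M_{\mathcal{T}}$; here one must be slightly careful since in infinite dimensions norming sequences need not cluster anywhere, so the cleanest route is to choose $\mathcal{A}_1,\mathcal{A}_2$ of \emph{finite rank} with $\mathcal{A}_i$ vanishing outside a finite-dimensional ``probe'' subspace, so that $\|\mathcal{T}(x_n,y_n)+\lambda\mathcal{A}_i(x_n,y_n)\|$ can be estimated uniformly, and then verify condition $(a)$ or $(b)$ of Theorem~\ref{linear operator orthogonality} for each $\mathcal{A}_i$ and its negation for the sum. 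Matching the $\varepsilon_n,\delta_n$ bookkeeping of part $(b)$ is where the real work lies; everything else is soft.
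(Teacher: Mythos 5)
There is a genuine gap, and it lies exactly where you locate the ``main obstacle''. Your plan requires showing $\mathcal{T}\not\perp_B(\mathcal{A}_1+\mathcal{A}_2)$ by arguing that every norming sequence for $\mathcal{T}$ concentrates near $M_{\mathcal{T}}$ and that the sum fails the one-sided conditions of Theorem~\ref{linear operator orthogonality} there. In a general normed linear space this is false: $M_{\mathcal{T}}$ being nonempty does not prevent the existence of norming sequences that stay far from $M_{\mathcal{T}}$ (closedness of $M_{\mathcal{T}}$ is irrelevant to this), and your proposed fix --- taking $\mathcal{A}_1,\mathcal{A}_2$ of finite rank supported on a ``probe'' subspace --- makes the problem worse rather than better: if some norming sequence $\{(x_n,y_n)\}$ eventually avoids the probe, then $\mathcal{A}_1+\mathcal{A}_2$ vanishes along it, condition $(a)$ of Theorem~\ref{linear operator orthogonality} is satisfied, and $\mathcal{T}\perp_B(\mathcal{A}_1+\mathcal{A}_2)$ holds after all, so no contradiction is reached. (A side remark: scaling $\mathcal{A}_i$ to be ``small in norm'' buys nothing, since Birkhoff--James orthogonality is homogeneous.)

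The idea you are missing is that one should not try to prove non-orthogonality of the sum at all. The paper's proof decomposes $\mathcal{T}$ itself: given $(x_1,y_1),(x_2,y_2)\in M_{\mathcal{T}}$ with, say, $x_1,x_2$ linearly independent, it builds (via $x_2'=a_0x_1+x_2$ with $x_1\perp_B x_2'$, similarly $y_2'$, complementary hyperplanes and a Hamel basis) a bounded bilinear operator $\mathcal{A}$ with $\mathcal{A}(x_1,y_1)=\mathcal{T}(x_1,y_1)$ and $\mathcal{A}(x_2,y_2)=0$, and sets $\mathcal{B}=\mathcal{T}-\mathcal{A}$, so that $\mathcal{B}(x_1,y_1)=0$. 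Since $\mathcal{A}$ vanishes at the norm-attaining pair $(x_2,y_2)$ and $\mathcal{B}$ vanishes at the norm-attaining pair $(x_1,y_1)$, the sufficiency direction you already have gives $\mathcal{T}\perp_B\mathcal{A}$ and $\mathcal{T}\perp_B\mathcal{B}$; smoothness then forces $\mathcal{T}\perp_B(\mathcal{A}+\mathcal{B})=\mathcal{T}$, which is impossible for $\mathcal{T}\neq 0$ (take $\lambda=-1$). This sidesteps every issue about norming sequences and the $\varepsilon_n,\delta_n$ bookkeeping, and it is the step your proposal does not supply. Your observation that $M_{\mathcal{T}}$ is invariant under the sign flips $(x,y)\mapsto(\pm x,\pm y)$ is correct and is the easy half of the statement.
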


\begin{proof}
Suppose on the contrary that  $M_{\mathcal{T}}\not=\{(\pm x_0,\pm
y_0)\}$ for any $(x_0,y_0)\in S_{X}\times S_{Y}$. Since
$M_{\mathcal{T}}\not=\emptyset$, this implies that there exist
$(x_1,y_1), (x_2,y_2)\in M_{\mathcal{T}}$ such that either
$x_1\not=\pm x_2$ or $y_1\not=\pm y_2$. Clearly, it is not possible
that  $x_1$, $x_2$ are linearly dependent in $X$ and $y_1$, $y_2$
are linearly dependent in $Y$. We now consider the
following remaining cases.\\

Case I: We assume that $x_1,x_2$ are linearly independent in $X$. In
this case, we first assume that $y_1,y_2$ are linearly independent
in $Y$. There exist scalars $a_0,b_0$ such that $x_1\perp_B
(a_0x_1+x_2)$ and $y_1\perp_B (b_0y_1+y_2)$. Let $x_2'=a_0x_1+x_2$
and $y_2'=b_0y_1+y_2$. Let $H_1$, $G_1$ be subspaces of
$1$-codimension in $X$, $Y$, respectively, such that $x_2'\in H_1$,
$x_1\perp_B H_1$ and $y_2'\in G_1$, $y_1\perp_B G_1$. Suppose $H_2$,
$G_2$ are subspaces of $1$-codimension in $H_1$, $G_1$,
respectively, such that $x_2'\perp_B H_2$ and $y_2'\perp_B G_2$.
Now, every element $x\in X$, and $y\in Y$ can be written uniquely as
$x=\alpha_1 x_1+\beta_1 x_2'+h$, $y=\alpha_2 y_1+\beta_2 y_2'+g$ for
some scalars $\alpha_1,\alpha_2,\beta_1,\beta_2$ and $h\in H_2$,
$g\in G_2$. If $(x,y)\in S_{X}\times S_{Y}$ then $(x,y)=(\alpha_1
x_1+\beta_1 x_2'+h, \alpha_2 y_1+\beta_2 y_2'+g)$. Also, $\|\alpha_1
x_1\|\leq \|\alpha_1 x_1+\beta_1 x_2'+h\|= 1$ and $1= \|\alpha_1
x_1+\beta_1 x_2'+h\|\geq \|\beta_1 x_2'+h\|-\|\alpha_1 x_1\|\geq
\|\beta_1 x_2'\|-1$. It follows from the linear independence of
$x_1$ and $x_2$ that $\|x_2'\|>0$. Thus $|\alpha_1|=\|\alpha_1
x_1\|\leq 1$ and $|\beta_1|\leq \frac{2}{\|x_2'\|}$. Using similar
arguments, we can show that $|\alpha_2|\leq 1$ and $|\beta_2|\leq
\frac{2}{\|y_2'\|}$. Let $\{h_\alpha: \alpha \in \Lambda_1\}$,
$\{g_\beta: \beta \in \Lambda_2\}$ be a Hamel basis for $H_2$ and
$G_2$, respectively. Then $\{x_1, x_2',h_\alpha: \alpha \in
\Lambda_1\}$, $\{y_1,y_2', g_\beta: \beta \in \Lambda_2\}$ form a
Hamel basis for $X$ and $Y$, respectively. We now define a bilinear
operator $\mathcal{A}$ from $X\times Y$ to $Z$ by:
\begin{align*}
\mathcal{A}(x_1,y_1)=\mathcal{T}(x_1,y_1),
\mathcal{A}(x_1,y_2')=b_0\mathcal{T}(x_1,y_1),
\mathcal{A}(x_1,g_\beta)=0,\\
\mathcal{A}(x_2',y_1)=a_0\mathcal{T}(x_1,y_1),\mathcal{A}(x_2',y_2')=a_0b_0\mathcal{T}(x_1,y_1), \mathcal{A}(x_2',g_\beta)=0,\\
\mathcal{A}(h_\alpha,y_1)=0, \mathcal{A}(h_\alpha,y_2')=0,
\mathcal{A}(h_\alpha,g_\beta)=0
\end{align*}
for all $\alpha \in \Lambda_1$  and  $\beta\in \Lambda_2$.

We claim that $\mathcal{A} \in \mathscr{B}(X\times Y,Z)$. Observe
that if $(x,y)=(\alpha_1 x_1+\beta_1 x_2'+h, \alpha_2 y_1+\beta_2
y_2'+g)\in S_X\times S_Y$ then
\begin{align*}
\mathcal{A}(x,y)=\alpha_1\alpha_2\mathcal{T}(x_1,y_1)+\alpha_1\beta_2b_0\mathcal{T}(x_1,y_1)+
\beta_1\alpha_2a_0\mathcal{T}(x_1,y_1)+\beta_1\beta_2a_0b_0\mathcal{T}(x_1,y_1)
\end{align*}
and thus $\mathcal{A} \in \mathscr{B}(X\times Y,Z)$. Also, it
follows from the definition of $\mathcal{A}$ that
$\mathcal{A}(x_2,y_2)=0$.

We now define $\mathcal{B}\in\mathscr{B}(X\times Y, Z)$ by
$\mathcal{B}=\mathcal{T}-\mathcal{A}$. Observe that
$\mathcal{B}(x_1,y_1)=0$. Thus
\begin{align*}
\mathcal{T}(x_2,y_2)\perp_B \mathcal{A}(x_2,y_2),
\mathcal{T}(x_1,y_1)\perp_B \mathcal{B}(x_1,y_1)
\end{align*}
and hence $\mathcal{T}\perp_B \mathcal{A}$ and $\mathcal{T}\perp_B
\mathcal{B}$. Now, by the smoothness of $\mathcal{T}$ we have
$\mathcal{T}\perp_B(\mathcal{A}+\mathcal{B})$. This leads to a
contradiction as $\mathcal{A}+\mathcal{B}=\mathcal{T}$. Thus in this
case we get $M_{\mathcal{T}}=\{(\pm x_0,\pm y_0)\}$ for some $(\pm
x_0,\pm
y_0)\in S _X\times S_Y$.\\

Now, we assume that $y_1,y_2$ are linearly dependent in $Y$. Then
either $y_1=y_2$ or $y_1=-y_2$. Suppose $G_1$ be a subspace of
$1$-codimension in $Y$ such $y_1\perp_B G_1$. If $\{g_\beta:\beta\in
\Lambda_2\}$ is a Hamel basis for $G_1$ then $\{y_1,g_\beta:\beta\in
\Lambda_2\}$ forms a Hamel basis for $Y$. Using the same notations
as in the previous case, let $\{x_1,x_2', h_\alpha: \alpha \in
\Lambda_1\}$ be a Hamel basis for $X$. We now define a bilinear
operator $\mathcal{A}$ from $X\times Y$ to $Z$ by:
\begin{align*}
\mathcal{A}(x_1,y_1)=\mathcal{T}(x_1,y_1),
\mathcal{A}(x_1,g_\beta)=0,\mathcal{A}(x_2',
y_1)=a_0\mathcal{T}(x_1,y_1),\\
\mathcal{A}(x_2',g_\beta)=0, \mathcal{A}(h_\alpha,y_1)=0,
\mathcal{A}(h_\alpha,g_\beta)=0
\end{align*}
for all $\alpha \in \Lambda_1$ and $\beta\in \Lambda_2$. Using the
arguments similar to the previous case we can show that $\mathcal{A}
\in \mathscr{B}(X\times Y,Z)$. We now define
$\mathcal{B}\in\mathscr{B}(X\times Y, Z)$ by
$\mathcal{B}=\mathcal{T}-\mathcal{A}$. Observe that
$\mathcal{A}(x_2,y_2)=0$ and $\mathcal{B}(x_1,y_1)=0$. Thus in this
case also we obtain $\mathcal{T}\perp_B(\mathcal{A}+\mathcal{B})$.
This leads to a contradiction as
$\mathcal{T}=\mathcal{A}+\mathcal{B}$. Thus $M_{\mathcal{T}}=\{(\pm
x_0,\pm y_0)\}$ for some $(\pm x_0,\pm y_0)\in S _X\times S_Y$.\\

Case II: We are only left with the case that $x_1,x_2$ are linearly
dependent in $X$ and $y_1,y_2$ are linearly independent in $Y$. In
this case also, using the arguments as in case I, it follows that
$M_{\mathcal{T}}=\{(\pm x_0,\pm y_0)\}$ for some $(\pm x_0,\pm
y_0)\in S _X\times S_Y$.

This completes the proof of the theorem.
\end{proof}

If in addition to the conditions of Lemma~\ref{compact w-w
continuous}, we assume $\mathcal{T}$ to be smooth then we
immediately obtain  the following result on the norm attainment set
of $\mathcal{T}$.
\begin{cor}
Let $X,Y$ be reflexive Banach spaces and let $Z$ be a normed linear
space. Suppose $\mathcal{T}\in\mathscr{K}(X\times Y, Z)$ is a smooth
and weak-weak continuous bilinear operator. Then
$M_{\mathcal{T}}(x_0,y_0)=\{(\pm x_0,\pm y_0)\}$ for some $(x_0,
y_0)\in S_X\times S_Y$.
\end{cor}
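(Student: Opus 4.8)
The plan is to deduce the statement directly from Lemma~\ref{compact w-w continuous} and Theorem~\ref{smooth norm attainment}, so essentially no new computation is required; the work consists entirely in checking that the hypotheses line up. First I would dispose of the trivial case $\mathcal{T}=0$, and then verify the hypotheses of Lemma~\ref{compact w-w continuous}: $X$ and $Y$ are reflexive Banach spaces, $Z$ is a normed linear space, and $\mathcal{T}\in\mathscr{K}(X\times Y,Z)$ is weak-weak continuous. That lemma then immediately gives that $\mathcal{T}$ attains its norm, i.e. $M_{\mathcal{T}}\neq\emptyset$.

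Next, since $\mathscr{K}(X\times Y,Z)\subseteq\mathscr{B}(X\times Y,Z)$ and $\mathcal{T}$ is a smooth point of $\mathscr{B}(X\times Y,Z)$ by hypothesis, and we have just shown $M_{\mathcal{T}}\neq\emptyset$, all the hypotheses of Theorem~\ref{smooth norm attainment} (which is stated for arbitrary normed linear spaces $X,Y,Z$) are in place. Applying that theorem produces $(x_0,y_0)\in S_X\times S_Y$ with $M_{\mathcal{T}}=\{(\pm x_0,\pm y_0)\}$, which is exactly the assertion of the corollary.

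Since both ingredients are already available, there is no genuine obstacle here; the only point that requires a moment's care is the bookkeeping around the word ``smooth'' — it should be read as smoothness of $\mathcal{T}$ regarded as a point of $\mathscr{B}(X\times Y,Z)$, so that it matches the hypothesis of Theorem~\ref{smooth norm attainment}. With that understood, the proof is just the two-line chain ``Lemma~\ref{compact w-w continuous} $\Rightarrow M_{\mathcal{T}}\neq\emptyset \Rightarrow$ Theorem~\ref{smooth norm attainment}''.
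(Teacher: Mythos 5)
Your proposal is correct and is exactly the paper's argument: the authors state the corollary as an immediate consequence of combining Lemma~\ref{compact w-w continuous} (which yields $M_{\mathcal{T}}\neq\emptyset$) with Theorem~\ref{smooth norm attainment}. Your remark about reading ``smooth'' as smoothness of $\mathcal{T}$ as a point of $\mathscr{B}(X\times Y,Z)$ is the intended interpretation, so nothing is missing.
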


We now prove the analogous result of \cite[Theorem 3.3]{SPMR} and
\cite[Theorem 2.5]{DS} for weak-weak continuous compact bilinear
operators from $X\times Y$ to $Z$, where $X,Y$ are reflexive Banach
spaces and $Z$ is a normed linear space.

\begin{theorem}
Let $X,Y$ be reflexive Banach spaces and let $Z$ be a normed linear
space. Let $\mathcal{T}\in\mathscr{K}(X\times Y, Z)$ be weak-weak
continuous. Then $\mathcal{T}$ is a smooth point in
$\mathscr{B}(X\times Y, Z)$ if and only if there exists some
$(x_0,y_0)\in S_{X}\times S_{Y}$ such that $M_{\mathcal{T}}=\{(\pm
x_0,\pm y_0)\}$ and $\mathcal{T}(x_0,y_0)$ is a smooth point in $Z$.
\end{theorem}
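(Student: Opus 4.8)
The plan is to prove the two implications separately, relying heavily on the machinery already assembled in this section, especially Theorem~\ref{smooth norm attainment}, Corollary~\ref{w-w continuous}, and the characterization of smoothness of a point in terms of Birkhoff-James orthogonality recalled in the introduction (James' theorem: $\mathcal{T}$ is smooth iff $\mathcal{T}\perp_B\mathcal{A}_1$ and $\mathcal{T}\perp_B\mathcal{A}_2$ imply $\mathcal{T}\perp_B(\mathcal{A}_1+\mathcal{A}_2)$). Throughout, recall from Lemma~\ref{compact w-w continuous} that $M_{\mathcal{T}}\neq\emptyset$ under the stated hypotheses, so Theorem~\ref{smooth norm attainment} applies whenever $\mathcal{T}$ is smooth.

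\textbf{The ``only if'' direction.} Suppose $\mathcal{T}$ is a smooth point in $\mathscr{B}(X\times Y,Z)$. Since $\mathcal{T}$ is weak-weak continuous and compact, Lemma~\ref{compact w-w continuous} gives $M_{\mathcal{T}}\neq\emptyset$, and then Theorem~\ref{smooth norm attainment} yields $M_{\mathcal{T}}=\{(\pm x_0,\pm y_0)\}$ for some $(x_0,y_0)\in S_X\times S_Y$. It remains to show $\mathcal{T}(x_0,y_0)$ is a smooth point in $Z$. I would argue by contradiction: if $\mathcal{T}(x_0,y_0)$ is not smooth, pick $z_1,z_2\in Z$ with $\mathcal{T}(x_0,y_0)\perp_B z_1$, $\mathcal{T}(x_0,y_0)\perp_B z_2$ but $\mathcal{T}(x_0,y_0)\not\perp_B(z_1+z_2)$. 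Then, exactly as in the construction inside the proof of Theorem~\ref{smooth norm attainment}, extend $x_0$ to a Hamel basis of $X$ and $y_0$ to a Hamel basis of $Y$, and define bilinear operators $\mathcal{A}_1,\mathcal{A}_2\in\mathscr{B}(X\times Y,Z)$ that send $(x_0,y_0)$ to $z_1$ and $z_2$ respectively and kill all other basis pairs; boundedness follows as there since a single coefficient controls the image. Using $M_{\mathcal{T}}=\{(\pm x_0,\pm y_0)\}$ together with Corollary~\ref{w-w continuous}, we get $\mathcal{T}\perp_B\mathcal{A}_i$ for $i=1,2$ (here one must check that $\mathcal{A}_i$ can be taken compact and weak-weak continuous, which is automatic since they have one-dimensional range), while $\mathcal{T}\not\perp_B(\mathcal{A}_1+\mathcal{A}_2)$ since $(\mathcal{A}_1+\mathcal{A}_2)(x_0,y_0)=z_1+z_2$. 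This contradicts smoothness of $\mathcal{T}$.

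\textbf{The ``if'' direction.} Conversely, assume $M_{\mathcal{T}}=\{(\pm x_0,\pm y_0)\}$ and $\mathcal{T}(x_0,y_0)$ is smooth in $Z$. To show $\mathcal{T}$ is smooth, take weak-weak continuous compact $\mathcal{A}_1,\mathcal{A}_2$ with $\mathcal{T}\perp_B\mathcal{A}_i$; by Corollary~\ref{w-w continuous} this is equivalent to $\mathcal{T}(x_0,y_0)\perp_B\mathcal{A}_i(x_0,y_0)$ for $i=1,2$. Smoothness of $\mathcal{T}(x_0,y_0)$ in $Z$ then gives $\mathcal{T}(x_0,y_0)\perp_B\big(\mathcal{A}_1(x_0,y_0)+\mathcal{A}_2(x_0,y_0)\big)=(\mathcal{A}_1+\mathcal{A}_2)(x_0,y_0)$, and invoking Corollary~\ref{w-w continuous} once more yields $\mathcal{T}\perp_B(\mathcal{A}_1+\mathcal{A}_2)$. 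By James' characterization of smooth points this proves $\mathcal{T}$ is a smooth point of $\mathscr{B}(X\times Y,Z)$.

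\textbf{Main obstacle.} The routine parts are the Hamel-basis construction of the auxiliary operators (already carried out in Theorem~\ref{smooth norm attainment}) and the bookkeeping of signs. The genuinely delicate point is the restriction to weak-weak continuous compact operators: James' smoothness criterion quantifies over \emph{all} $\mathcal{A}\in\mathscr{B}(X\times Y,Z)$, whereas Corollary~\ref{w-w continuous} is stated only for weak-weak continuous compact $\mathcal{A}$. So I would need to verify that the finite-rank operators $\mathcal{A}_i$ produced in the ``only if'' direction are indeed weak-weak continuous and compact — which holds because they factor through a finite-dimensional space via coordinate functionals that are weakly continuous on the reflexive spaces $X,Y$ — and, more importantly, to make sure the statement of smoothness we are proving is understood relative to the same restricted class, or else to supply the extra argument that orthogonality of $\mathcal{T}$ to an arbitrary bounded bilinear $\mathcal{A}$ already reduces to the behavior of $\mathcal{A}$ at $(x_0,y_0)$ via Theorem~\ref{compact operator orthogonality}. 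Handling this compatibility carefully is where the proof must be most careful.
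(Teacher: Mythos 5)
Your proposal follows essentially the same route as the paper's proof: the ``only if'' direction combines Lemma~\ref{compact w-w continuous} and Theorem~\ref{smooth norm attainment} with a Hamel-basis construction of two auxiliary rank-one-type bilinear operators to contradict smoothness of $\mathcal{T}$ at $(x_0,y_0)$, and the ``if'' direction uses Corollary~\ref{w-w continuous} together with smoothness of $\mathcal{T}(x_0,y_0)$ and James' criterion. The compatibility issue you flag in your last paragraph --- that Corollary~\ref{w-w continuous} is stated only for weak-weak continuous compact $\mathcal{A}$ while smoothness in $\mathscr{B}(X\times Y,Z)$ quantifies over all bounded $\mathcal{A}$ --- is real, but the paper's own proof applies the corollary to arbitrary $\mathcal{A}_1,\mathcal{A}_2\in\mathscr{B}(X\times Y,Z)$ without comment, so your attempt is no weaker than the published argument on this point.
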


\begin{proof}
Let us first prove the sufficient part of the theorem. Let
$\mathcal{A}_1,\mathcal{A}_2\in\mathscr{B}(X\times Y, Z)$ be such that
$\mathcal{T}\perp_B\mathcal{A}_1$ and
$\mathcal{T}\perp_B\mathcal{A}_2$. It follows from
Corollary~\ref{w-w continuous} that $\mathcal{T}(x_0,y_0)\perp_B
\mathcal{A}_1(x_0,y_0)$ and $\mathcal{T}(x_0,y_0)\perp_B
\mathcal{A}_2(x_0,y_0)$. Now, from the smoothness of
$\mathcal{T}(x_0,y_0),$ we obtain that $\mathcal{T}(x_0,y_0)\perp_B
(\mathcal{A}_1(x_0,y_0)+\mathcal{A}_2(x_0,y_0))$. Since
$(x_0,y_0)\in M_{\mathcal{T}}$, this implies that
$\mathcal{T}\perp_B (\mathcal{A}_1+\mathcal{A}_2)$.

Let us now prove the necessary part of the theorem. It follows from
Lemma~\ref{compact w-w continuous} and Theorem~\ref{smooth norm
attainment} that $M_{\mathcal{T}}=\{(\pm x_0,\pm y_0)\}$ for some
$(x_0,y_0)\in S_{X} \times S_{Y}$. Suppose on the contrary that
$\mathcal{T}(x_0,y_0)$ is not a smooth point in $Z$. Then there
exist $z_1,z_2\in Z$ such that $\mathcal{T}(x_0,y_0)\perp_B z_1$,
$\mathcal{T}(x_0,y_0)\perp_B z_2$ and
$\mathcal{T}(x_0,y_0){\not\perp}_B (z_1+z_2)$. Suppose $H_1$, $G_1$
are subspaces of $1$-codimension in $X$, $Y$, respectively such that
$x_0\perp_B H_1$ and $y_0\perp_B G_1$. Let $\{h_\alpha:\alpha\in
\Lambda_1\}$, $\{g_\beta:\beta\in \Lambda_2\}$ be a Hamel basis for
$H_1$ and $G_1$, respectively. Then $\{x_0,h_\alpha:\alpha\in
\Lambda_1\}$,  $\{y_0,g_\beta:\beta\in \Lambda_2\}$ form a Hamel
basis for $X$ and $Y$, respectively. Now, we define $\mathcal{A},
\mathcal{B}\in\mathscr{K}(X\times Y, Z)$ by
\begin{align*}
\mathcal{A}(x_0,y_0)=z_1, \mathcal{A}(x_0,g_\beta)=0,
\mathcal{A}(h_\alpha, y_0)=0,\mathcal{A}(h_\alpha,g_\beta)=0,\\
\mathcal{B}(x_0,y_0)=z_2, \mathcal{B}(x_0,g_\beta)=0,
\mathcal{B}(h_\alpha, y_0)=0, \mathcal{B}(h_\alpha,g_\beta)=0
\end{align*}
for all $\alpha \in\Lambda_1$, $\beta \in\Lambda_2$. From the
definition of $\mathcal{A}$ and $\mathcal{B},$ it follows that
$\mathcal{T}\perp_B \mathcal{A}$ and $\mathcal{T}\perp_B
\mathcal{B}$. Now, from the smoothness of $\mathcal{T}$ we get that
$\mathcal{T}\perp_B (\mathcal{A}+\mathcal{B})$. Observe that
$\mathcal{A}$  and $\mathcal{B}$ are also weak-weak continuous. Thus
it follows from Theorem~\ref{compact operator orthogonality} that
\begin{align*}
\mathcal{T}(x_0,y_0)\perp_B
(\mathcal{A}(x_0,y_0)+\mathcal{B}(x_0,y_0)).
\end{align*}
This contradicts that $\mathcal{T}(x_0,y_0){\not\perp}_B (z_1+z_2)$.
Hence $\mathcal{T}(x_0,y_0)$ is a smooth point in $Z$.
\end{proof}

A complete characterization of smoothness of bounded linear
operators between normed linear spaces (without assuming any
restriction on the norm attainment set of the operator) in terms of
operator Birkhoff-James orthogonality and s.i.p. was obtained in
\cite[Theorem 2.1]{SPMR}. We observe that the same characterization
extends to the setting of bounded bilinear operators. For the
bilinear case, the proof follows from similar arguments, as given in
\cite[Theorem 2.1]{SPMR}. Therefore, we omit the proof of the
following result.

\begin{theorem}
Let $X,Y,Z$ be normed linear spaces and let $\mathcal{T}\in
\mathscr{B}(X\times Y,Z)$ be non-zero. Then the following conditions
are equivalent:

\begin{itemize}
\item[(a)] $T$ is smooth.

\item[(b)] For any $\mathcal{A}\in \mathscr{B}(X\times Y,Z)$,
$\mathcal{T}\perp_B \mathcal{A}$ if and only if for any norming
sequence $\{(x_n,y_n)\}$ for $\mathcal{T}$, any subsequential limit
of $\{[Ax_n, Tx_n]\}$ is $0$, for any s.i.p. $[~,~]$ on $Z$.
\end{itemize}
\end{theorem}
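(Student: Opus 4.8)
The plan is to prove the implications $(a)\Rightarrow(b)$ and $(b)\Rightarrow(a)$ separately; the argument runs parallel to \cite[Theorem 2.1]{SPMR}, but I would organise it so that no reconstruction of the norm derivatives along norming sequences is required. For a norming sequence $\{(x_n,y_n)\}$ for $\mathcal{T}$ I write $\mathcal{T}(x_n,y_n)$ and $\mathcal{A}(x_n,y_n)$ for the quantities abbreviated by $\mathcal{T}x_n$ and $\mathcal{A}x_n$ in the statement; note that a nonzero $\mathcal{T}$ always admits a norming sequence and, by \cite{G}, that $Z$ carries at least one s.i.p.\ compatible with the norm. I will use repeatedly that every compatible s.i.p.\ on $Z$ is linear in its first slot and satisfies $|[u,w]|\le\|u\|\,\|w\|$ and $[w,w]=\|w\|^2$; that $\mathcal{T}\perp_B\mathcal{A}$ forces $\rho_+(\mathcal{T},\mathcal{A})\ge0\ge\rho_-(\mathcal{T},\mathcal{A})$ (Theorems~\ref{positive} and \ref{negative}); and that smoothness of $\mathcal{T}$ gives $\rho_+(\mathcal{T},\cdot)=\rho_-(\mathcal{T},\cdot)$.

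For $(a)\Rightarrow(b)$, the elementary inequality to start from is: for $n$ large enough that $\mathcal{T}(x_n,y_n)\neq0$ and for every $\lambda\in\mathbb{R}$,
\[
\|\mathcal{T}(x_n,y_n)+\lambda\mathcal{A}(x_n,y_n)\|\ \ge\ \|\mathcal{T}(x_n,y_n)\|+\lambda\,\frac{[\mathcal{A}(x_n,y_n),\mathcal{T}(x_n,y_n)]}{\|\mathcal{T}(x_n,y_n)\|},
\]
which follows from the Cauchy--Schwarz inequality for s.i.p.'s, linearity in the first slot, and $[w,w]=\|w\|^2$. If, for some norming sequence and some compatible s.i.p., every subsequential limit of the (bounded) scalar sequence $\{[\mathcal{A}(x_n,y_n),\mathcal{T}(x_n,y_n)]\}$ is $0$, then that sequence converges to $0$; bounding $\|\mathcal{T}+\lambda\mathcal{A}\|$ below by the left side of the display and letting $n\to\infty$ gives $\|\mathcal{T}+\lambda\mathcal{A}\|\ge\|\mathcal{T}\|$ for all $\lambda$, i.e.\ $\mathcal{T}\perp_B\mathcal{A}$ (this half uses no smoothness). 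Conversely, suppose $\mathcal{T}$ is smooth and $\mathcal{T}\perp_B\mathcal{A}$, and let $c$ be any subsequential limit of $\{[\mathcal{A}(x_n,y_n),\mathcal{T}(x_n,y_n)]\}$ along some norming sequence and some compatible s.i.p.; passing to the limit in the display along that subsequence (which is again a norming sequence) gives $\|\mathcal{T}+\lambda\mathcal{A}\|\ge\|\mathcal{T}\|+\lambda c/\|\mathcal{T}\|$ for all $\lambda$, and dividing by $\lambda>0$, respectively by $\lambda<0$, and letting $\lambda\to0$ gives $\rho_-(\mathcal{T},\mathcal{A})\le c/\|\mathcal{T}\|\le\rho_+(\mathcal{T},\mathcal{A})$. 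Smoothness makes these two derivatives equal, while $\mathcal{T}\perp_B\mathcal{A}$ makes the first $\ge0$ and the second $\le0$, so both are $0$; hence $c=0$, which is exactly the assertion of $(b)$.

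For $(b)\Rightarrow(a)$ I would argue by contradiction through James' characterization of smooth points \cite{J1}. If $\mathcal{T}$ were not smooth there would be $\mathcal{A}_1,\mathcal{A}_2\in\mathscr{B}(X\times Y,Z)$ with $\mathcal{T}\perp_B\mathcal{A}_1$, $\mathcal{T}\perp_B\mathcal{A}_2$ but $\mathcal{T}\not\perp_B(\mathcal{A}_1+\mathcal{A}_2)$. Fix any norming sequence $\{(x_n,y_n)\}$ for $\mathcal{T}$ and any compatible s.i.p.\ on $Z$. Applying $(b)$ to $\mathcal{A}_1$ and to $\mathcal{A}_2$ (together with $\mathcal{T}\perp_B\mathcal{A}_i$ and boundedness of the scalar sequences, as in the previous paragraph) gives $[\mathcal{A}_1(x_n,y_n),\mathcal{T}(x_n,y_n)]\to0$ and $[\mathcal{A}_2(x_n,y_n),\mathcal{T}(x_n,y_n)]\to0$; adding these and using linearity of the s.i.p.\ in its first slot, together with $(\mathcal{A}_1+\mathcal{A}_2)(x_n,y_n)=\mathcal{A}_1(x_n,y_n)+\mathcal{A}_2(x_n,y_n)$, gives $[(\mathcal{A}_1+\mathcal{A}_2)(x_n,y_n),\mathcal{T}(x_n,y_n)]\to0$. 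Since the norming sequence and the s.i.p.\ were arbitrary, $(b)$ applied to $\mathcal{A}_1+\mathcal{A}_2$ now forces $\mathcal{T}\perp_B(\mathcal{A}_1+\mathcal{A}_2)$, a contradiction; hence $\mathcal{T}$ is smooth.

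The step that would really be an obstacle is absent from the route above; it appears in the more ``constructive'' approach (parallel to \cite[Theorem 2.4]{SPM}, \cite[Theorem 2.1]{SPMR} and \cite[Theorem 2.4]{DS}), which for a non-smooth $\mathcal{T}$ would try to exhibit an $\mathcal{A}$ with $\mathcal{T}\perp_B\mathcal{A}$ yet $[\mathcal{A}(x_n,y_n),\mathcal{T}(x_n,y_n)]\not\to0$, and thus needs the reconstruction statement that $\rho_+(\mathcal{T},\mathcal{A})$ and $\rho_-(\mathcal{T},\mathcal{A})$ are precisely the largest and the smallest subsequential limits of $[\mathcal{A}(x_n,y_n),\mathcal{T}(x_n,y_n)]/\|\mathcal{T}(x_n,y_n)\|$ as the norming sequence and the s.i.p.\ vary. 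There one obtains near-support functionals of $Z$ at $\mathcal{T}(x_n,y_n)$ from honest support functionals at $(\mathcal{T}+\lambda_n\mathcal{A})(x_n,y_n)$, where $(x_n,y_n)$ nearly maximizes $(x,y)\mapsto\|(\mathcal{T}+\lambda_n\mathcal{A})(x,y)\|$ and $\lambda_n\to0$, and the delicate point --- sidestepped above by James' argument --- is to upgrade those near-support functionals to honest ones at $\mathcal{T}(x_n,y_n)$ (so that they define a genuine compatible s.i.p.), a Bishop--Phelps--Bollob\'as type step that is subtle precisely when $M_{\mathcal{T}}=\emptyset$.
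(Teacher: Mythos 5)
Your proposal is correct and is essentially the argument the paper intends: the paper omits the proof of this theorem, deferring to \cite[Theorem 2.1]{SPMR}, and your proof is the natural transcription of that argument to the bilinear setting --- the semi-inner-product Cauchy--Schwarz inequality $\|\mathcal{T}+\lambda\mathcal{A}\|\,\|\mathcal{T}(x_n,y_n)\|\geq \|\mathcal{T}(x_n,y_n)\|^2+\lambda[\mathcal{A}(x_n,y_n),\mathcal{T}(x_n,y_n)]$ along a norming sequence for both halves of the inner biconditional, the squeeze $\rho_-(\mathcal{T},\mathcal{A})\leq c/\|\mathcal{T}\|\leq\rho_+(\mathcal{T},\mathcal{A})$ combined with smoothness and Theorems~\ref{positive} and \ref{negative} to force $c=0$, and James' characterization of smooth points for $(b)\Rightarrow(a)$. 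Your closing paragraph on the ``constructive'' alternative and its Bishop--Phelps--Bollob\'as obstruction is accurate as commentary but plays no role in the proof and could simply be dropped.
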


We end this article with the following two results, giving complete
characterizations of approximate Birkhoff-James orthogonality for
weak-weak continuous compact bilinear operators and bounded bilinear
operators. These results extend \cite[Theorem 3.3]{SPM3} and
\cite[Theorem 3.4]{SPM3}. Proofs of the following characterizations
follow from similar arguments as given in \cite{SPM3}, and are
therefore omitted. We also note that the following results extend
Theorem~\ref{compact operator orthogonality} and Theorem~\ref{linear
operator orthogonality}.

\begin{theorem}
Let $X,Y$ be reflexive Banach spaces and let $Z$ be a Banach space.
Let $\mathcal{T} \in \mathscr{K}(X\times Y,Z)$ be weak-weak
continuous. Then for any weak-weak continuous bilinear operator
$\mathcal{A}\in\mathscr{K}(X\times Y,Z)$,
$\mathcal{T}\perp_B^\epsilon \mathcal{A}$ if and only if there exist
$(x_1,y_1)$, $(x_2,y_2)\in M_\mathcal{T}$ such that
$\|\mathcal{T}(x_1,y_1) +\lambda \mathcal{A}(x_1,y_1)\|^2\geq
\|\mathcal{T}\|^2-2\epsilon\lambda \|\mathcal{T}\|\|\mathcal{A}\|$
for all $\lambda\geq 0$ and $\|\mathcal{T}(x_2,y_2) +\lambda
\mathcal{A}(x_2,y_2)\|^2\geq \|\mathcal{T}\|^2-2\epsilon\lambda
\|\mathcal{T}\|\|\mathcal{A}\|$ for all $\lambda\leq 0$.
\end{theorem}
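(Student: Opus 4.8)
The plan is to imitate the proof of Theorem~\ref{compact operator orthogonality}, replacing Birkhoff--James orthogonality by its $\epsilon$-approximate version and carrying the resulting quadratic estimates through the limiting argument; this runs parallel to the treatment of compact linear operators in \cite{SPM3}. The sufficiency is straightforward: if $(x_1,y_1),(x_2,y_2)\in M_{\mathcal{T}}$ satisfy the two displayed inequalities, then for $\lambda\geq 0$,
\[
\|\mathcal{T}+\lambda\mathcal{A}\|^2\;\geq\;\|(\mathcal{T}+\lambda\mathcal{A})(x_1,y_1)\|^2\;=\;\|\mathcal{T}(x_1,y_1)+\lambda\mathcal{A}(x_1,y_1)\|^2\;\geq\;\|\mathcal{T}\|^2-2\epsilon\|\mathcal{T}\|\|\lambda\mathcal{A}\|,
\]
and the inequality at $(x_2,y_2)$ does the same for $\lambda\leq 0$; hence $\mathcal{T}\perp_B^{\epsilon}\mathcal{A}$.

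For the necessary part I would first dispose of the trivial case $\mathcal{T}=0$, and then, for each $n\in\mathbb{N}$, put $\mathcal{T}_n:=\mathcal{T}+\tfrac1n\mathcal{A}$, which again lies in $\mathscr{K}(X\times Y,Z)$ and is weak--weak continuous. By Lemma~\ref{compact w-w continuous} there is $(x_n,y_n)\in M_{\mathcal{T}_n}$; since $X\times Y$ is reflexive, $B_{X\times Y}$ is weakly compact, so I may pass to a subsequence with $(x_n,y_n)\rightharpoonup(x_1,y_1)\in B_{X\times Y}$. Weak--weak continuity together with compactness of $\mathcal{T}$ and of $\mathcal{A}$ forces (after a further subsequence) $\mathcal{T}(x_n,y_n)\to\mathcal{T}(x_1,y_1)$ and $\mathcal{A}(x_n,y_n)\to\mathcal{A}(x_1,y_1)$ in norm. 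From $\|\mathcal{T}_n\|\leq\|\mathcal{T}\|+\tfrac1n\|\mathcal{A}\|$ and $\|\mathcal{T}_n\|^2\geq\|\mathcal{T}\|^2-\tfrac{2\epsilon}{n}\|\mathcal{T}\|\|\mathcal{A}\|$ (the latter being $\mathcal{T}\perp_B^{\epsilon}\mathcal{A}$ evaluated at $\lambda=\tfrac1n$) one gets $\|\mathcal{T}_n\|\to\|\mathcal{T}\|$; combining this with $\|\mathcal{T}(x_n,y_n)\|\geq\|\mathcal{T}_n(x_n,y_n)\|-\tfrac1n\|\mathcal{A}\|=\|\mathcal{T}_n\|-\tfrac1n\|\mathcal{A}\|$ and $\|\mathcal{T}(x_n,y_n)\|\leq\|\mathcal{T}\|$ gives $\|\mathcal{T}(x_1,y_1)\|=\|\mathcal{T}\|$, and then, exactly as in Lemma~\ref{compact w-w continuous}, $\|x_1\|=\|y_1\|=1$, i.e.\ $(x_1,y_1)\in M_{\mathcal{T}}$.

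It then remains to check that $(x_1,y_1)$ satisfies the first displayed inequality for all $\lambda\geq 0$. I would argue by contradiction: if $\|\mathcal{T}(x_1,y_1)+\lambda_0\mathcal{A}(x_1,y_1)\|^2<\|\mathcal{T}\|^2-2\epsilon\lambda_0\|\mathcal{T}\|\|\mathcal{A}\|$ for some $\lambda_0>0$, then, by the norm convergences above, the same strict inequality holds with $(x_n,y_n)$ in place of $(x_1,y_1)$ for all large $n$. For such $n$ write $\tfrac1n=t\lambda_0$ with $t=\tfrac1{n\lambda_0}\in(0,1)$ and use the convexity decomposition
\[
\mathcal{T}_n(x_n,y_n)=(1-t)\,\mathcal{T}(x_n,y_n)+t\bigl(\mathcal{T}(x_n,y_n)+\lambda_0\mathcal{A}(x_n,y_n)\bigr)
\]
together with $\|\mathcal{T}(x_n,y_n)\|\leq\|\mathcal{T}\|$; estimating $\|\mathcal{T}_n(x_n,y_n)\|$ and comparing with $\|\mathcal{T}_n\|^2\geq\|\mathcal{T}\|^2-2\epsilon t\lambda_0\|\mathcal{T}\|\|\mathcal{A}\|$ yields a contradiction once $t$ is small enough — this is an elementary one-variable estimate obtained by expanding to first order in $t$. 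Applying the whole argument to $-\mathcal{A}$ in place of $\mathcal{A}$ (using $\mathcal{T}\perp_B^{\epsilon}\mathcal{A}\iff\mathcal{T}\perp_B^{\epsilon}(-\mathcal{A})$) then produces $(x_2,y_2)\in M_{\mathcal{T}}$ satisfying the second displayed inequality for $\lambda\leq 0$.

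I expect the main obstacle to be the necessary direction, and inside it two points: (i) certifying that the weak subsequential limit of the norm-attaining points of $\mathcal{T}+\tfrac1n\mathcal{A}$ actually lies in $M_{\mathcal{T}}$, which is precisely where reflexivity, weak--weak continuity and compactness all enter, via Lemma~\ref{compact w-w continuous}; and (ii) converting the global quadratic estimate for $\mathcal{T}+\tfrac1n\mathcal{A}$ into the desired one-sided estimate at the limit point, for which the convexity decomposition above is the key device — the bookkeeping with the $2\epsilon$-terms being somewhat more delicate here than in the plain Birkhoff--James setting of Theorem~\ref{compact operator orthogonality}.
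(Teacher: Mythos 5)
Your outline follows the route the paper itself intends: the authors omit this proof, remarking only that it follows the argument of \cite[Theorem 3.3]{SPM3}, and your sketch is precisely that argument transplanted to the bilinear setting, with Lemma~\ref{compact w-w continuous} supplying the norm-attaining points of $\mathcal{T}+\tfrac1n\mathcal{A}$, the weak-compactness/compactness/weak--weak continuity combination identifying the limit point as an element of $M_{\mathcal{T}}$, and the convexity decomposition carrying the $\epsilon$-terms; the first-order-in-$t$ estimate does close the contradiction, because a failure at $\lambda_0$ forces $\|\mathcal{T}\|^2-2\epsilon\lambda_0\|\mathcal{T}\|\|\mathcal{A}\|>0$ and hence a definite gap between $\|\mathcal{T}(x_n,y_n)+\lambda_0\mathcal{A}(x_n,y_n)\|$ and $\|\mathcal{T}\|-\epsilon\lambda_0\|\mathcal{A}\|$. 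One point deserves attention: applying your argument to $-\mathcal{A}$ produces, at $(x_2,y_2)$, the inequality $\|\mathcal{T}(x_2,y_2)+\lambda\mathcal{A}(x_2,y_2)\|^2\geq\|\mathcal{T}\|^2+2\epsilon\lambda\|\mathcal{T}\|\|\mathcal{A}\|$ for $\lambda\leq 0$, i.e.\ with a plus sign, whereas the displayed statement has a minus sign there. For $\lambda\leq 0$ the printed right-hand side equals $\|\mathcal{T}\|^2+2\epsilon|\lambda|\,\|\mathcal{T}\|\|\mathcal{A}\|\geq\|\mathcal{T}\|^2$, and the necessity of that condition already fails for $\epsilon>0$ in a Hilbert-space target when $\mathcal{T}\perp_B\mathcal{A}$ exactly: it would force the left derivative of $\lambda\mapsto\|\mathcal{T}(x_2,y_2)+\lambda\mathcal{A}(x_2,y_2)\|^2$ at $0$ to be at most $-2\epsilon\|\mathcal{T}\|\|\mathcal{A}\|<0$ at some point of $M_{\mathcal{T}}$, which need not happen. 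So the printed sign is evidently a typo (the statement should match \cite[Theorem 3.3]{SPM3}), and your proof establishes the correct version; just note explicitly that it does not, and should not, deliver the inequality as literally printed.
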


\begin{theorem}
Let $X,Y,Z$ be normed linear spaces and let $\mathcal{T}\in
\mathscr{B}(X\times Y,Z)$.  Then for any $\mathcal{A}\in
\mathscr{B}(X\times Y,Z)$, $\mathcal{T}\perp_B^\epsilon \mathcal{A}$
if and only if either of the conditions in $(a)$ or in $(b)$  hold
true:

\begin{itemize}
\item[(a)] There exists a sequence  $\{(x_n,y_n)\}$ in $S_{X}\times
S_{Y}$ such that $\|\mathcal{T}(x_n,y_n)\|\longrightarrow
\|\mathcal{T}\|$ and $\|\mathcal{A}(x_n,y_n)\|\longrightarrow
\epsilon \|\mathcal{A}\|$ as $n\longrightarrow \infty$.

\item[(b)] There exists two sequences $\{(x_n,y_n)\}$, $\{(u_n,v_n)\}$  in $S_{X}\times
S_{Y}$ and two sequences of positive real numbers
$\{\varepsilon_n\}$, $\{\delta_n\}$ such that

\item[(i)] $\varepsilon_n\longrightarrow 0$,
$\delta_n\longrightarrow 0$,
$\|\mathcal{T}(x_n,y_n)\|\longrightarrow \|\mathcal{T}\|$ and
$\|\mathcal{T}(u_n,v_n)\|\longrightarrow \|\mathcal{T}\|$ as
$n\longrightarrow \infty$.

\item[(ii)] $\|\mathcal{T}(x_n,y_n)+\lambda \mathcal{A}(x_n,y_n)
\|^2\geq(1-\varepsilon_n^2) \|\mathcal{T}(x_n,y_n)\|^2-2\epsilon
\sqrt{1-\varepsilon_n^2}\|\mathcal{T}(x_n,y_n)\|\|\lambda\mathcal{A}\|$
for all $\lambda\geq 0$.

\item[(iii)] $\|\mathcal{T}(u_n,v_n)+\lambda \mathcal{A}(u_n,v_n)
\|^2\geq(1-\delta_n^2) \|\mathcal{T}(u_n,v_n)\|^2-2\epsilon
\sqrt{1-\delta_n^2}\|\mathcal{T}(u_n,v_n)\|\|\lambda\mathcal{A}\|$
for all $\lambda\leq 0$.
\end{itemize}
\end{theorem}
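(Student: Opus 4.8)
The result is a bilinear analogue of \cite[Theorem 3.4]{SPM3}, and the plan is to reproduce that argument with the unit sphere $S_W$ of a normed domain replaced throughout by $S_X\times S_Y$ and linearity replaced by bilinearity. The only properties of $S_W$ that the linear proof actually uses are that $\|T\|$ equals the supremum of $\|Tw\|$ over $w\in S_W$, that $S_W=-S_W$, and that $Tw$ scales homogeneously under rescalings of $w$; all three survive in the bilinear setting, since $\|\mathcal{T}\|=\sup\{\|\mathcal{T}(x,y)\|:(x,y)\in S_X\times S_Y\}$, $\mathcal{T}(-x,-y)=\mathcal{T}(x,y)$, and $\mathcal{T}(\alpha x,\beta y)=\alpha\beta\,\mathcal{T}(x,y)$. (An alternative route is to invoke the canonical isometric identification $\mathscr{B}(X\times Y,Z)\cong\mathscr{B}\bigl(X,\mathscr{B}(Y,Z)\bigr)$ sending $\mathcal{T}$ to $\widehat{\mathcal T}$, $\widehat{\mathcal T}(x)(y)=\mathcal{T}(x,y)$; since $\perp_B^\epsilon$ is defined purely by norms it is preserved under this identification, so one may apply \cite[Theorem 3.4]{SPM3} to $\widehat{\mathcal T},\widehat{\mathcal A}$ and then, for each $n$, pick $y_n\in S_Y$ nearly attaining $\|\widehat{\mathcal T}(x_n)\|$ to recover the stated conditions.) I will instead present the direct adaptation, since it delivers the conditions in exactly the stated form.

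It is convenient first to record that $\mathcal{T}\perp_B^\epsilon\mathcal{A}$ is equivalent to the conjunction of the two one-sided inequalities $\|\mathcal{T}+\lambda\mathcal{A}\|^2\ge\|\mathcal{T}\|^2-2\epsilon\lambda\|\mathcal{T}\|\|\mathcal{A}\|$ for all $\lambda\ge 0$ and $\|\mathcal{T}+\lambda\mathcal{A}\|^2\ge\|\mathcal{T}\|^2-2\epsilon|\lambda|\|\mathcal{T}\|\|\mathcal{A}\|$ for all $\lambda\le 0$, because $\|\lambda\mathcal{A}\|=|\lambda|\,\|\mathcal{A}\|$. The sufficiency of $(a)$ and $(b)$ is then routine. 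If $(a)$ holds, then $\|\mathcal{T}+\lambda\mathcal{A}\|\ge\|\mathcal{T}(x_n,y_n)+\lambda\mathcal{A}(x_n,y_n)\|\ge\|\mathcal{T}(x_n,y_n)\|-|\lambda|\,\|\mathcal{A}(x_n,y_n)\|$, and letting $n\to\infty$ gives $\|\mathcal{T}+\lambda\mathcal{A}\|\ge\|\mathcal{T}\|-\epsilon|\lambda|\,\|\mathcal{A}\|$; squaring this when the right-hand side is nonnegative, and observing that otherwise $\|\mathcal{T}\|^2-2\epsilon|\lambda|\|\mathcal{T}\|\|\mathcal{A}\|<0\le\|\mathcal{T}+\lambda\mathcal{A}\|^2$, yields $\mathcal{T}\perp_B^\epsilon\mathcal{A}$. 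If $(b)$ holds, then for $\lambda\ge 0$ one bounds $\|\mathcal{T}+\lambda\mathcal{A}\|^2\ge\|\mathcal{T}(x_n,y_n)+\lambda\mathcal{A}(x_n,y_n)\|^2$ and passes to the limit in $(ii)$ using $\varepsilon_n\to 0$ and $\|\mathcal{T}(x_n,y_n)\|\to\|\mathcal{T}\|$, and the case $\lambda\le 0$ uses $(iii)$; this again gives $\mathcal{T}\perp_B^\epsilon\mathcal{A}$.

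The substance is the necessity direction. Assume $\mathcal{T}\perp_B^\epsilon\mathcal{A}$. For each $n$, using the one-sided inequality with $\lambda=1/n$ and the fact that $\|\mathcal{T}+\tfrac1n\mathcal{A}\|$ is a supremum over $S_X\times S_Y$, choose $(x_n,y_n)\in S_X\times S_Y$ with $\|\mathcal{T}(x_n,y_n)+\tfrac1n\mathcal{A}(x_n,y_n)\|^2$ within $1/n^2$ of $\|\mathcal{T}+\tfrac1n\mathcal{A}\|^2$; a short triangle-inequality estimate then forces $\|\mathcal{T}(x_n,y_n)\|\to\|\mathcal{T}\|$, so $\{(x_n,y_n)\}$ is a norming sequence for $\mathcal{T}$, and the analogous construction with $\lambda=-1/n$ produces a second norming sequence $\{(u_n,v_n)\}$. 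Now dichotomize: if, after passing to a subsequence, $\|\mathcal{A}(x_n,y_n)\|\to\epsilon\|\mathcal{A}\|$ or $\|\mathcal{A}(u_n,v_n)\|\to\epsilon\|\mathcal{A}\|$, then $(a)$ holds; otherwise one extracts from the near-optimality of $(x_n,y_n)$ and $(u_n,v_n)$ the relaxed one-sided estimates $(ii)$ and $(iii)$, with the slack quantities $\varepsilon_n,\delta_n$ built explicitly out of the gaps $\|\mathcal{T}\|-\|\mathcal{T}(x_n,y_n)\|$ and the defect of $\|\mathcal{A}(x_n,y_n)\|$ from $\epsilon\|\mathcal{A}\|$, and one checks $\varepsilon_n,\delta_n\to 0$, giving $(b)$. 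The main obstacle is precisely this last bookkeeping: unlike the $\epsilon=0$ case of Theorem~\ref{linear operator orthogonality}, where the positive/negative-part memberships are exact, here one must track how the $\epsilon$-defects propagate through the squared inequalities and produce null sequences $\varepsilon_n,\delta_n$ for which $(ii)$, resp. $(iii)$, holds simultaneously for every $\lambda\ge 0$, resp. $\lambda\le 0$; this is exactly the delicate estimate carried out in \cite[Theorem 3.4]{SPM3}, which I would import verbatim after the above substitutions.
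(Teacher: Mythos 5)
Your proposal takes essentially the same route as the paper, which omits the proof entirely and states only that it ``follows from similar arguments as given in \cite{SPM3}'' --- i.e., the direct adaptation of \cite[Theorem 3.4]{SPM3} to the bilinear setting that you describe, resting on the same observations (the norm of $\mathcal{T}$ is a supremum over $S_X\times S_Y$, symmetry under $(x,y)\mapsto(-x,-y)$, and homogeneity in each slot). Your write-up is in fact more detailed than the paper's, spelling out the sufficiency direction and the near-norming-sequence construction for necessity before importing the remaining bookkeeping from the cited linear result.
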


\bibliographystyle{amsplain}

\end{document}